\documentclass[reqno]{amsart}

\usepackage{mathrsfs}
\usepackage{amsmath}
\usepackage{amssymb}
\usepackage{cite}
\usepackage{latexsym}
\usepackage{graphicx}
\usepackage{amscd}

\usepackage{color}
\usepackage{comment}

\theoremstyle{plain}
\newtheorem{thm}{Theorem}[section]

\newtheorem{cor}[thm]{Corollary}
\newtheorem{dfn}[thm]{Definition}
\newtheorem{prop}[thm]{Proposition}
\newtheorem{rmk}[thm]{Remark}
\newtheorem{ex}[thm]{Example}

\def\D{\mathrm{D}}

\def\O{\mathscr{O}}
\def\T{\mathrm{T}}

\def\d{\mathrm{d}}

\def\s{\mathrm{s}}
\def\u{\mathrm{u}}

\def\Cset{\mathbb{C}}
\def\Nset{\mathbb{N}}
\def\Rset{\mathbb{R}}
\def\Sset{\mathbb{S}}
\def\Tset{\mathbb{T}}
\def\Zset{\mathbb{Z}}

\def\id{\mathrm{id}}

\def\epsilon{\varepsilon}

\makeatletter
 \@addtoreset{equation}{section}
\makeatother
\def\theequation{\arabic{section}.\arabic{equation}}

\begin{document}


\title[Smooth integrability of diffeomorphisms]%
{Smooth integrability of diffeomorphisms}

\author{Kazuyuki Yagasaki}

\address{Department of Applied Mathematics and Physics, Graduate School of Informatics,
Kyoto University, Yoshida-Honmachi, Sakyo-ku, Kyoto 606-8501, JAPAN}
\email{yagasaki@amp.i.kyoto-u.ac.jp}
\date{\today}
\subjclass[2020]{37C79; 39A36; 37C05}
\keywords{Integrability; diffeomorphism; Liouville-Arnold theorem; first integral; commutative vector field}

\begin{abstract}
Motivated by the notion of integrability introduced by Bogoyavlenskij for vector fields,
 we propose a definition of smooth integrability for general diffeomorphisms.
In brief, we say that a diffeomorphism is integrable
 if it commutes with the flows of {\color{black}commutative} vector fields and shares their first integrals.
We establish a Liouville-Arnold type theorem
 and prove that on connected invariant level sets of the first integrals,
 a smoothly integrable diffeomorphism is conjugate to a skew translation on a toroidal cylinder,
 and in particular, when the level sets are compact,
 the induced dynamics is quasiperiodic, as in the classical Hamiltonian case.
We further show that linear diffeomorphisms on real or complex Euclidean spaces
 are integrable in our sense,
 as is the case for linear vector fields in the sense of Bogoyavlenskij,
 and symplectic diffeomorphisms that are integrable in their standard Liouville-type definition
 are integrable in our framework.
We also modify the cotangent lift construction,
 which is useful in the treatment of integrability of vector fields,
 for diffeomorphisms,
 and show that integrability is preserved under this correspondence.
Several examples are provided to illustrate the scope of the theory.
 \end{abstract}
\maketitle


\section{Introduction}

Integrability {\color{black}and} solvability by quadrature of differential equations
 have attracted attention since before the days of Poincar\'{e} \cite{P92}.
For example, Bruns \cite{B87} discussed algebraic integrability of the classical three-body problem.
Especially recently, a vast literature on the topic has appeared,
 thanks to the celebrated Morales-Ramis theory \cite{M99,MR01,MRS07} via Ziglin's result \cite{Z82}.
The concept of integrability has also been developed for infinite-dimensional dynamical systems
 including partial differential equations, as {\color{black}is} well known \cite{A11,Z99}.
Thus, integrability has become a central theme in the study of dynamical systems.

For Hamiltonian systems, the standard definition of integrability is due to Liouville \cite{L55,A89}.
In particular, the Liouville-Arnold theorem (see, e.g., Sections {\color{black}49 and 50} of \cite{A89})
 guarantees that integrable Hamiltonian systems are solved by quadrature
 and their dynamics is very simple and typically quasiperiodic.
For general vector fields that may not be Hamiltonian,
 some definitions of integrability have been proposed (see, e.g., \cite{K13}).
Among them, the definition proposed by Bogoyavlenskij \cite{B98} is now widely used,
 especially in the study of nonintegrability of dynamical systems
 (see, e.g., \cite{Z02,C09,AZ10,ALMP18,MP20,MY18,AY20,Y22,Y23,MY23}),
 and seems the most natural
 since it includes the definition for Hamiltonian systems as a special case 
 and linear systems and linearizable ones
 are always integrable in this sense (see, e.g., \cite{Y25a}).

In contrast, for discrete dynamical systems,
which are typically given by diffeomorphisms,
integrability is often understood in a descriptive sense in the literature,
 referring to systems whose dynamics can be explicitly characterized,
 for instance through invariants, linearization, solvability,
{\color{black}Lax representations, or algebraic approaches
 (see, e.g., \cite{V91,KV02,S03,HJN16}).}
While a well-established definition exists for symplectic diffeomorphisms
 \cite{BRSG91,V91,HJN16}
 and a Liouville--Arnold type theorem has been proved in this setting,
 no widely accepted intrinsic definition of integrability is available
 for general diffeomorphisms, except {\color{black}in some restricted settings}
 \cite{CGM06,CGM08b,Z13,JS21}.
This situation naturally raises the question
 of whether a structural extension of Bogoyavlenskij-type integrability
 can be formulated for arbitrary diffeomorphisms.

In this paper, we introduce a notion of smooth integrability
 for general diffeomorphisms that are not necessarily symplectic,
 motivated by the notion of integrability introduced by Bogoyavlenskij \cite{B98}
 for vector fields.
In brief, we say that a diffeomorphism is integrable
 if it commutes with the flows of commuting vector fields and shares their first integrals.
{\color{black}
See Definition~\ref{dfn:2a} below for the precise definition.}
We establish a Liouville-Arnold type theorem
 and prove that on connected invariant level sets of the first integrals,
 a smoothly integrable diffeomorphism is conjugate to a skew translation on a toroidal cylinder,
 and in particular, when the level sets are compact,
 the induced dynamics is quasiperiodic, as in the classical Hamiltonian case.

We further investigate basic properties of smoothly integrable diffeomorphisms.
In particular, we show that linear diffeomorphisms on real or complex Euclidean spaces
 are always integrable in our sense,
 as is the case for linear vector fields in the sense of Bogoyavlenskij.
Moreover, symplectic diffeomorphisms that are integrable in their standard Liouville-type definition
 are proved to be integrable in our framework.
We also modify the cotangent lift construction for diffeomorphisms
 and show that integrability is preserved under this correspondence.
{\color{black}
Thus, although Definition~\ref{dfn:2a} may appear restrictive at first sight,
 it covers several natural and important classes of diffeomorphisms.}
Several examples, including the multi-dimensional Lyness map
 \cite{KLR93,CGM07,CGM08a,CGM08b,BR09}, 
 are provided to illustrate the scope of the theory.

{\color{black}
On the other hand, Lax representations \cite{RVKQ07,S03,SV03}
 and algebraic approaches \cite{BV99,KV02}
 provide important frameworks for integrable maps.
The former often yield first integrals through spectral invariants of Lax matrices,
 whereas the latter include approaches based on additional algebraic structures,
 as well as criteria based on degree growth or algebraic entropy for rational
 or birational maps.
These approaches are therefore different in nature from the notion of integrability considered here.}

The outline of this paper is as follows:
In Section~2, we briefly review the integrability of vector fields
 in the sense of Liouville and Bogoyavlenskij,
 including their definitions and relationship.
In Section~3, we consider general diffeomorphisms
 and provide our definition of their integrability
 and some basic properties of integrable diffeomorphisms,
 including their relationship with the flows of integrable vector fields
 and the integrability of linear diffeomorphisms.
We generalize the Liouville-Arnold theorem to diffeomorphisms
 and show that the dynamics of integrable diffeomorphisms are very regular,
 like those of integrable vector fields in Section~4.
In addition, we discuss the non-integrability of such diffeomorphisms
 as they exhibit chaotic dynamics.
Finally, we consider symplectic diffeomorphisms
 and recall the standard definition of their integrability in Section~5.
We discuss its relationship {\color{black}with integrability of general diffeomorphisms,
 including their cotangent lifts.}


\section{Integrability of Vector Fields}
We first recall the integrability of vector fields.
We begin with the Hamiltonian case.

Let $(M^{2n},\Omega)$ be a $2n$-dimensional $C^\infty$ or analytic symplectic manifold
 for $n\in\Nset$, i.e., $M^{2n}$ is a $2n$-dimensional $C^\infty$ or analytic manifold
 and $\Omega$ is a closed nondegenerate differential two-form on it.
We consider an $n$-degree-of-freedom $C^r$ Hamiltonian system 
\begin{equation}
\dot{x}=X_H(x),\quad
x\in M^{2n},
\label{eqn:Hsys}
\end{equation}
having a $C^{r+1}$ Hamiltonian function $H:M^{2n}\to\Rset$ on $(M^{2n},\Omega)$,
 where $X_H$ denotes the Hamiltonian vector field for $H$,
 i.e., the interior product of $\Omega$ and $X_H$ satisfies
\[
i_{X_H}\Omega=\d H.
\]
Here and hereafter $r\ge 1$, and $r=\infty$ and $\omega$,
 for the latter of which $H$ is analytic, are allowed.
We take $r+1=\infty$ and $\omega$ when $r=\infty$ and $\omega$, respectively.
We also assume that $M^{2n}$ is analytic when $r=\omega$.
By Darboux's theorem (see, e.g., Section~43 of \cite{A89}),
 there is a local coordinate system $(q,p)\in\Rset^n\times\Rset^n$
 such that the symplectic form is represented by
\[
\Omega=\sum_{j=1}^n\d p_j\wedge\d q_j,
\]
where $q_j$ and $p_j$ denote the $j$-th elements of $q$ and $p$, respectively.
In the coordinate system, the Hamiltonian system \eqref{eqn:Hsys} becomes
\begin{equation}
\dot{q}=\D_p H(q,p),\quad
\dot{p}=-\D_q H(q,p),
\label{eqn:Hsys1}
\end{equation}
where `$\D_q$' and `$\D_p$' represent the partial differentiation with respect to $q$ and $p$, respectively.
The definition of integrability in the sense of Liouville \cite{L55,A89}
 is stated  for \eqref{eqn:Hsys} as follows.

\begin{dfn}[Liouville]
\label{dfn:1a}
The Hamiltonian system~\eqref{eqn:Hsys} is called \emph{integrable}
 if {\color{black}there exist} $n$ scalar-valued functions $F_1(:=H),F_2,\dots,F_{n}$ such that
 the one-forms $\d F_1,\dots, \d F_{n}$
 are linearly independent almost everywhere $($a.e.$)$
 and in involution, i.e., their \emph{Poisson brackets} satisfy
\[
\{F_j,F_k\}(x):=\Omega(X_{F_j}(x),X_{F_k}(x))\equiv 0
\]
for $j,k\in[n]:=\{1,2,\ldots,n\}$.
We say that the Hamiltonian system \eqref{eqn:Hsys}
 is \emph{analytically} $($resp. ${\color{black}C^r})$ \emph{integrable}
 if the first integrals are analytic $($resp. ${\color{black}C^{r+1}})$. 
\end{dfn}

When $n=1$, the Hamiltonian system \eqref{eqn:Hsys} is $C^r$ integrable.
{\color{black}In general,} if it is $C^r$ integrable in the sense of Liouville {\color{black}with $r\ge 2$},
 the level set $F^{-1}(c)$ with $F(x)=(F_1(x),\ldots,F_n(x))$ and $c\in\Rset^n$ is connected
 and $\d F(x)$ is nondegenerate on it,
 then the Hamiltonian system \eqref{eqn:Hsys} can be solved by quadrature
 and transformed into
\begin{align}
&
\dot{I}_j=0,\quad
\dot{\theta}_j=\frac{\partial H}{\partial I_j}(I),\quad
j\in[n],\notag\\
&
I=(I_1,\ldots,I_n)\in U_I,\quad
\theta=(\theta_1,\ldots,\theta_n)\in\Tset^\ell\times\Rset^{n-\ell},
\label{eqn:aasys}
\end{align}
{\color{black}by a $C^{r}$ change of coordinates,}
where {\color{black}$H(I)$ is $C^{r+1}$,}
 $U_I$ is a neighborhood of $I=0$ in $\Rset^n$ and $\ell\in[n]\cup\{0\}$.
See, e.g., Sections~49 and 50 of \cite{A89}.
This result is referred to as the Liouville-Arnold or Arnold-Jost theorem.
In particular, if $\ell=n$, then the level set $F^{-1}(c)$
 is diffeomorphic to the $n$-dimensional torus $\Tset^n$
 and the Hamiltonian system \eqref{eqn:Hsys} exhibits quasiperiodic motion on it,
 as in the standard statement of the Liouville-Arnold theorem. 
Bruns \cite{B87} showed that
 the classical three-body problem is not algebraically integrable in the sense of Liouville.
His result has been extended in \cite{P92,T01,BW03,Y24} subsequently.

We turn to the general case.
Let $M^n$ be an $n$-dimensional $C^\infty$ or analytic manifold.
Consider $n$-dimensional dynamical systems of the form
\begin{equation}
\dot{x}=X(x),\quad x\in M^n,
\label{eqn:gsys}
\end{equation}
where $n\in\Nset$ and $X:M^n\to TM^n$ is $C^r$.
We assume that $M^n$ is analytic when $r=\omega$ hereafter like $M^{2n}$.
The definition of integrability in the sense of Bogoyavlenskij \cite{B98}
 is stated for \eqref{eqn:gsys} as follows.

\begin{dfn}[Bogoyavlenskij]
\label{dfn:1b}
For $m\in\Nset$ such that $1\le m\le n$,
 the system~\eqref{eqn:gsys} is called \emph{$(m,n-m)$-integrable} or simply \emph{integrable}
 if there exist $m$ vector fields $X_1(:=X),X_2,\dots,X_m$
 and $n-m$ scalar-valued functions $F_1,\dots,F_{n-m}$ on $M^n$ such that
 the following two conditions hold$\,:$
\begin{enumerate}
\setlength{\leftskip}{-1.8em}
\item[\rm(i)]
$X_1,\dots,X_m$ are linearly independent a.e.
 and commute with each other, i.e.,
\[
[X_j,X_k](x):=\left(X_jX_k-X_kX_j\right)(x)\equiv 0\quad\mbox{for $j,k\in[m]$},
\]
where $[\cdot,\cdot]$ denotes the Lie bracket$;$
\item[\rm(ii)]
The one-forms $\d F_1,\dots, \d F_{n-m}$ are linearly independent a.e.
 and $F_1,\dots,F_{n-m}$ are first integrals of $X_1, \dots,X_m$, i.e.,
\[
X_j(F_k)(x):=\d F_k(x)(X_j(x))\equiv 0\quad\mbox{for $j\in[m]$ and $k\in[n-m]$}.
\]
\end{enumerate}
We say that the system \eqref{eqn:gsys}
 is \emph{analytically} $($resp. ${\color{black}C^r})$ \emph{integrable}
 if the first integrals and commutative vector fields are analytic $($resp. ${\color{black}C^r})$.
\end{dfn}

In a local coordinate system on $M^n$,
 if $X_j(x)=\sum_{k=1}^n v_{jk}(x)\partial/\partial x_k$
 and $v_j(x)=(v_{j1}(x),\ldots,\linebreak v_{jn}(x))$ for $j\in[m]$,
 then conditions~(i) and (ii) are stated as follows:
\begin{enumerate}
\setlength{\leftskip}{-1.8em}
\item[\rm(i)]
$v_1(x),\dots,v_m(x)$ are linearly independent a.e.
 and commute with each other, i.e.,
\[
[v_j,v_k](x):=\D v_k(x)v_j(x)-\D v_j(x)v_k(x)\equiv 0\quad\mbox{for $j,k\in[m]$},
\]
where $\D$ represents the partial differentiation with respect to $x\,;$
\item[\rm(ii)]
The derivatives $\D F_1(x),\dots, \D F_{n-m}(x)$ are linearly independent a.e.
 and $F_1(x),\linebreak\dots,F_{n-m}(x)$ are first integrals of $v_1(x), \dots,v_m(x)$, i.e.,
\[
\D F_k(x)\cdot v_j(x)\equiv 0\quad\mbox{for $j\in[m]$ and $k\in[n-m]$},
\]
{\color{black}where `$\cdot$' denotes the standard inner product.}
\end{enumerate}

If it is integrable in the sense of Bogoyavlenskij,
 the level set $F^{-1}(c)$ with $F(x)=(F_1(x),\ldots,F_{n-m}(x))$ and $c\in\Rset^{n-m}$
 is connected and the $m$ commutative vector fields are linearly independent on it, 
 then the system \eqref{eqn:gsys} can be transformed
 {\color{black}by a $C^r$ change of coordinates} into
\begin{align}
&
\dot{I}_j=0,\quad
\dot{\theta}_j=\omega_j(I),\quad
j\in[n-m],\notag\\
&
I=(I_1,\ldots,I_{n-m})\in U_I,\quad
\theta=(\theta_1,\ldots,\theta_m)\in\Tset^\ell\times\Rset^{m-\ell},
\label{eqn:gaasys}
\end{align}
which is similar to \eqref{eqn:aasys}
 but $U_I$ is a neighborhood of $I=0$ in $\Rset^{n-m}$ and $\ell\in[m]\cup\{0\}$,
 where $\omega_j:U_I\to\Rset$, $j\in[m]$, are $C^r$.
See, e.g., \cite{B98} and the proof of Theorem~\ref{thm:3a} below.
When $n=1$, the system \eqref{eqn:gsys}
 is $C^r$ $(1,0)$-integrable immediately.
Definition~\ref{dfn:1b} is regarded as a generalization
 of Definition~\ref{dfn:1a} for Hamiltonian systems
 since an $n$ degree-of-freedom Liouville integrable Hamiltonian system with $n\ge 1$
 has not only $n$ functionally independent first integrals $F_j(x)$, $j\in[n]$,
 but also $n$ linearly independent commutative (Hamiltonian) vector fields $X_{F_j}$, $j\in[n]$.
In particular, linear vector fields are always analytically $(n,0)$-integrable in this sense
 (see, e.g., Proposition~3.13 of \cite{Y25a}).
Moreover, it includes the primitive one when $m=1$
 and the Lie case when $m=n$ (see, e.g., \cite{K13}).
It was also shown in \cite{Y25a} that the {\color{black}linearizability} of vector fields
 is closely related to their {\color{black}integrability}. 

On the other hand,
 we can regard the system \eqref{eqn:gsys} as a part of a Hamiltonian system
 by \emph{cotangent lift}.
Let $T^\ast M^n$ denote the cotangent bundle of $M^n$
 and let $(x,p)\in\Rset^n\times\Rset^n$ represent a local coordinate system there.
A Hamiltonian system with the Hamiltonian $H(x,p)=p\cdot v(x)$ is given by
\begin{equation}
\dot{x}=v(x),\quad
\dot{p}=-\D v(x)^\T p
\label{eqn:Hsys2}
\end{equation}
in the local coordinate system,
 where $v(x)$ represents the vector field of \eqref{eqn:gsys}
 and the superscript `${}\T$' represents the transpose operator.
If $\tilde{v}(x)$ is a commutative vector field for \eqref{eqn:gsys},
 then $\tilde{F}(x,p)=p\cdot\tilde{v}(x)$ is a first integral since
\begin{align*}
&
\D_x \tilde{F}(x,p)\cdot v(x)-\D_p \tilde{F}(x,p)\cdot\D v(x)^\T p\\
&
=(\D \tilde{v}(x)^\T p)\cdot v(x)-\tilde{v}(x)\cdot\D v(x)^\T p\\
&=p\cdot(\D \tilde{v}(x)v(x)-\D v(x)\tilde{v}(x))\equiv 0.
\end{align*}
Thus, we prove the following as in Proposition~2 of \cite{AZ10}.

\begin{prop}
\label{prop:1a}
If the system~\eqref{eqn:gsys} is Bogoyavlenskij integrable,
 then the Hamiltonian system~\eqref{eqn:Hsys2} is Liouville integrable.
\end{prop}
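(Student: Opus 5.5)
The plan is to exhibit $n$ first integrals of \eqref{eqn:Hsys2} on $T^\ast M^n$ (of dimension $2n$), namely
\[
\tilde F_j(x,p)=p\cdot v_j(x),\quad j\in[m],\qquad \tilde F_{m+k}(x,p)=F_k(x),\quad k\in[n-m],
\]
where $v_1(:=v),\dots,v_m$ and $F_1,\dots,F_{n-m}$ are the commuting vector fields and first integrals from Definition~\ref{dfn:1b}. Then I will check the three conditions of Definition~\ref{dfn:1a}. The first function $\tilde F_1=p\cdot v(x)$ is the Hamiltonian $H$. Each $\tilde F_j$ is intrinsically defined as the fiberwise linear function $\xi\mapsto\xi(X_j)$ on $T^\ast M^n$, and each $\tilde F_{m+k}$ is the pullback $F_k\circ\pi$. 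So the construction is coordinate free.

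Next I will verify involutivity with the canonical bracket $\{f,g\}=\D_x f\cdot\D_p g-\D_p f\cdot\D_x g$ in local coordinates. For the fiberwise linear functions, the computation displayed just before the statement generalizes directly:
\[
\{p\cdot v_j,p\cdot v_k\}=\pm\, p\cdot[v_j,v_k](x)\equiv 0
\]
by condition~(i). For a linear function against a pulled-back one,
\[
\{p\cdot v_j,F_k\}=\pm\,\D F_k(x)\cdot v_j(x)\equiv 0
\]
by condition~(ii). Finally, $\{F_k,F_l\}=0$ because neither function depends on $p$. In particular every $\tilde F_i$ Poisson-commutes with $H=\tilde F_1$, so all of them are first integrals of \eqref{eqn:Hsys2}.

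The step needing care is independence a.e. of $\d\tilde F_1,\dots,\d\tilde F_n$. Write the Jacobian with respect to $(x,p)$. The rows of $\tilde F_{m+k}$ are $(\D F_k(x),0)$, and the rows of $\tilde F_j$ are $(\D v_j(x)^\T p,\,v_j(x)^\T)$. Take $x$ in the full-measure set where $v_1,\dots,v_m$ are independent and $\D F_1,\dots,\D F_{n-m}$ are independent. Suppose a linear combination of the rows vanishes. The $p$-block forces $\sum_j a_jv_j(x)=0$, hence all $a_j=0$. The $x$-block then forces $\sum_k b_k\D F_k(x)=0$, hence all $b_k=0$. So independence holds on the preimage under $\pi$ of that set.

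It remains to check that this preimage has full measure in $T^\ast M^n$. This holds because a fibration over a null set is null, by local product structure and Fubini. I expect this measure-theoretic point to be the only mild obstacle.

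Regarding regularity, the $\tilde F_i$ are $C^r$ (analytic if the data are), matching the regularity of $H$ itself.
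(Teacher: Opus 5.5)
Your proposal is correct and follows the paper's route. The paper only displays the computation showing that $p\cdot\tilde v(x)$ is a first integral of \eqref{eqn:Hsys2} and defers the rest to Proposition~2 of \cite{AZ10}; your choice of the $n$ functions $p\cdot v_j(x)$ and $F_k(x)$, with the pairwise bracket computations and the a.e.\ independence argument via the block structure of the Jacobian (including the Fubini step on $T^\ast M^n$), is exactly the argument that reference supplies.
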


This result plays an important role in the extension of the Morales-Ramis theory \cite{M99,MR01,MRS07}
 to non-Hamiltonian systems \cite{AZ10}.
 

\section{Definition of Integrable Diffeomorphisms}

We turn to general diffeomorphisms and define their integrability.
Let $f:M^n\to M^n$ be a $C^r$ diffeomorphism,
 where $M^n$ is a $C^\infty$ or analytic manifold, as in Section 2‌.
We propose the following definition of its integrability.

\begin{dfn}[Integrability of diffeomorphisms]
\label{dfn:2a}
For $m\in\Nset$ with $0\le m\le n$,
 $f$ is called \emph{$(m,n-m)$-integrable} or simply \emph{integrable}
 if there exist $m$ vector fields $X_1,\ldots,X_m$
 and $n-m$ scalar-valued functions $F_1,\ldots,F_{n-m}$ such that
 the following three conditions hold$\,:$
\begin{enumerate}
\setlength{\leftskip}{-1.6em}
\item[\rm(i)]
$X_1,\dots,X_m$ are linearly independent a.e.
 and commute with each other$\,;$
\item[\rm(ii)]
$\d F_1,\ldots,\d F_{n-m}$ are linearly independent a.e.
 and $F_1,\ldots,F_{n-m}$ are first integrals of $X_1,\dots,X_m;$
\item[\rm(iii)]
The flow $\varphi_j^{t}$ of $X_j$ commutes with $f$, i.e.,
\begin{equation}
f\circ\varphi_j^{t}(x)-\varphi_j^{t}\circ f(x)\equiv 0,
\label{eqn:dfn2a}
\end{equation}
for any $t\in\Rset$ and $j\in[m]$, and $F_j$ is also a first integral of $f$, i.e.,
\[
F_j(f(x))\equiv F_j(x),
\]
for $j\in[n-m]$.
\end{enumerate}
We say that $f$ is \emph{analytically} $($resp. ${\color{black}C^r})$ \emph{integrable}
 if the first integrals and commutative vector fields as well as $f$ are analytic $($resp. ${\color{black}C^r})$.
\end{dfn}

\begin{rmk}\
\label{rmk:2a}
\begin{enumerate}
\setlength{\leftskip}{-1.6em}
\item[\rm(i)]
In {\rm\cite{CGM06}}, an $n$-dimensional diffeomorphism was called \emph{integrable}
 if it has $n$ functionally independent first integrals,
 equivalently $m=0$ in Definition~{\rm\ref{dfn:2a}},
 in which the existence of commutative vector fields are not required. 
Moreover, it was shown that if $f:U\to U$ is integrable in this meaning and
\[
\mathrm{card}\left(\bigcap_{j=1}^n\{F_j(x)-c_j\}\cap U\right)\le k
\quad\mbox{for any $c_j\in\Rset$, $j\in[n]$,}
\]
for some $k\in\Nset$,
 where $U$ is an open set in $\Rset^n$ and  $F_j$, $j\in[n]$, are differentiable first integrals,
 then there exists $\ell\in\Nset$ such that $f^\ell(x)=x$ for any $x\in U$.
\item[\rm(ii)]
In {\rm\cite{CGM08b,Z13}},
 an $n$-dimensional diffeomorphism was called \emph{integrable}
 if it has $n-1$ functionally independent first integrals.
This condition is weaker than those of Definition~$\ref{dfn:2a}$.
In {\rm\cite{CGM08b}}, it was shown that
 such a diffeomorphism has an orbit exhibiting translation under one of additional conditions,
 including commutativity with the flow of a vector field.
In {\rm\cite{Z13}}, analytic diffeomorphisms of the form $Ax+O(|x|^2)$,
 which have a fixed point at $x=0$,
 where $A$ is an $n\times n$ matrix having no eigenvalue with modulus one,
 were considered and proven to be analytically conjugate
 to the flows of analytically $(1,n-1)$-integrable vector fields near $x=0$ in the sense of Bogoyavlenskij,
 so that they are analytically $(1,n-1)$-integrable near $x=0$ in the sense of Definition~$\ref{dfn:2a}$.
Thus, one needs more than the existence of $n-1$ first integrals
 for {\color{black}a more suitable definition of} integrability for $n$-dimensional diffeomorphisms. 
\item[\rm(iii)]
In {\rm\cite{JS21}}, $n$-dimensional smooth diffeomorphisms
 of the form $Ax+O(|x|^2)$ with an $n\times n$ matrix $A$
 were also considered and called \emph{integrable}
 if they have $m$ commutative diffeomorphisms of $A_jx+O(|x|^2)$, $j\in[m]$,
 including themselves, and $n-m$ common first integrals.
Normal forms of such diffeomorphisms near the origin were discussed
 when the $n\times n$ matrices $A_j$, $j\in[m]$, are diagonalizable, the first integrals are homogeneous
 and the commutative diffeomorphisms are the time-one maps of certain vector fields
 or $n$-dimensional vectors
 whose elements are the logarithms of moduli of the eigenvalues of $A_j$, $j\in[m]$,
 are linearly independent.
Thus, a simple replacement of commutative vector fields with commutative diffeomorphisms
 in Definition~$\ref{dfn:2a}$ is not appropriate for the integrability of diffeomorphisms. 
\end{enumerate}
\end{rmk}

{\color{black}
The following proposition gives an equivalent infinitesimal form of the relation \eqref{eqn:dfn2a}.

\begin{prop}
\label{prop:2d}
$X$ is a commutative vector field of a diffeomorphism $f$ if and only if
\begin{equation}
\d f(x)X(x)\equiv X(f(x)).
\label{eqn:prop2d1}
\end{equation}
\end{prop}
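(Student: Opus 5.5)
We prove the two implications separately, using only that $f$ is a diffeomorphism and $\varphi^t$ is the flow of $X$. Here ``$X$ is a commutative vector field of $f$'' means that \eqref{eqn:dfn2a} holds for all $t$ at which the flow is defined.

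\emph{Necessity.} Assume $f\circ\varphi^t(x)=\varphi^t\circ f(x)$ for all $t$ and $x$. Differentiate both sides with respect to $t$ at $t=0$. On the left the chain rule gives $\d f(\varphi^0(x))\,\partial_t\varphi^t(x)|_{t=0}=\d f(x)X(x)$. On the right, $\partial_t\varphi^t(f(x))|_{t=0}=X(f(x))$. Comparing the two yields \eqref{eqn:prop2d1}. This step is immediate.

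\emph{Sufficiency.} Assume \eqref{eqn:prop2d1} holds, fix $x$, and set $\gamma(t)=f(\varphi^t(x))$. Then $\gamma(0)=f(x)$ and
\[
\dot\gamma(t)=\d f(\varphi^t(x))X(\varphi^t(x))=X(f(\varphi^t(x)))=X(\gamma(t)),
\]
where the middle equality is \eqref{eqn:prop2d1} applied at the point $\varphi^t(x)$. Thus $\gamma$ is an integral curve of $X$ through $f(x)$. Since $X$ is at least $C^1$, integral curves are unique, so $\gamma(t)=\varphi^t(f(x))$. This is exactly \eqref{eqn:dfn2a}.

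The only point that needs care is the domain of the flow when $X$ is not complete, and this is where we expect the main obstacle. The uniqueness argument shows that $f\circ\varphi^t(x)=\varphi^t(f(x))$ holds on the whole maximal interval of $t\mapsto\varphi^t(x)$. To match the two maximal intervals, apply the same reasoning to $f^{-1}$. Rewriting \eqref{eqn:prop2d1} at the point $f^{-1}(y)$ gives $\d f^{-1}(y)X(y)=X(f^{-1}(y))$. Hence $f^{-1}$ maps integral curves of $X$ to integral curves of $X$, so $f$ maps maximal integral curves bijectively onto maximal integral curves, and the existence intervals coincide. If $X$ is complete, as Definition~\ref{dfn:2a} implicitly assumes by writing ``for any $t\in\Rset$'', this remark is unnecessary.
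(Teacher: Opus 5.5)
Your proposal is correct and follows essentially the same route as the paper: for necessity you differentiate the commutation relation at $t=0$, and for sufficiency you observe that $f(\varphi^t(x))$ and $\varphi^t(f(x))$ both solve $\dot{x}=X(x)$ with the same initial value $f(x)$ and conclude by uniqueness. Your extra step for incomplete $X$ is valid and goes beyond the paper, which tacitly assumes completeness. There you use $f^{-1}$ to show that $f$ carries maximal integral curves onto maximal integral curves, so the existence intervals coincide.
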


\begin{proof}
Suppose that $X$ is a commutative vector field of $f$.
Let $\varphi^t$ denote the flow of $X$.
Then
\begin{equation}
f(\varphi^t(x))-\varphi^t(f(x))\equiv 0.
\label{eqn:prop2d2}
\end{equation}
Differentiating the above relation with respect to $t$ at $t=0$, we have
\[
\d f(x)X(x)-X(f(x))\equiv 0,
\]
which yields \eqref{eqn:prop2d1}.

Conversely, suppose that Eq.~\eqref{eqn:prop2d1} holds.
We have
\[
\frac{\d}{\d t}f(\varphi^t(x))=\d f(\varphi^t(x))X(\varphi^t(x))=X(f(\varphi^t(x)))
\]
and
\[
\frac{\d}{\d t}\varphi^t(f(x))=X(\varphi^t(f(x)))
\]
for any $x\in M^n$.
Hence, $f(\varphi^t(x))$ and $\varphi^t(f(x))$ satisfy the same differential equation
\[
\dot{x}=X(x).
\]
Since $f(\varphi^t(x)),\varphi^t(f(x))=f(x)$ at $t=0$,
 we obtain \eqref{eqn:prop2d2}
  by the uniqueness of solutions to the initial value problem of differential equations.
\end{proof}

\begin{rmk}
A vector field $X$ satisfying \eqref{eqn:prop2d1}
 is often called a Lie symmetry of $f$ $($see, e.g., {\rm\cite{HBQC96,CGM08b})}.
It is also referred to as a continuous symmetry in {\rm\cite{DLM12}}
and as a symmetry field in {\rm\cite{ST16}}.
\end{rmk}
}

We show {\color{black}below} that Definition~\ref{dfn:2a} does not give an unnecessary restriction 
 as integrable diffeomorphisms,
 compared with Definition~\ref{dfn:1b} for integrable vector fields.
We immediately obtain the following.

\begin{prop}
\label{prop:2a}
Let $\varphi^t$ be the flow generated by \eqref{eqn:gsys}.
If the system \eqref{eqn:gsys} is $(m,n-m)$-integrable in the sense of Bogoyavlenskij,
 then the flow $\varphi^t$ is $(m,n-m)$-integrable in the sense of Definition~$\ref{dfn:2a}$
 for any $t\in\Rset$.
\end{prop}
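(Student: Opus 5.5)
Fix $t\in\Rset$ and set $f=\varphi^t$, which is a $C^r$ diffeomorphism of $M^n$ (implicitly assuming, as the statement does, that the flow is globally defined; otherwise we work on the domain where $\varphi^t$ is defined). We take the same data as in Definition~\ref{dfn:1b}: the commutative vector fields $X_1(=X),X_2,\ldots,X_m$ and the first integrals $F_1,\ldots,F_{n-m}$. Conditions~(i) and (ii) of Definition~\ref{dfn:2a} are literally conditions~(i) and (ii) of Definition~\ref{dfn:1b}, so nothing needs to be shown there. The whole proof reduces to verifying condition~(iii) for $f=\varphi^t$.

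For the first integrals: since $X(F_k)=X_1(F_k)\equiv0$, we have
\[
\frac{\d}{\d s}F_k(\varphi^s(x))=\d F_k(\varphi^s(x))X(\varphi^s(x))\equiv 0,
\]
so $F_k(\varphi^t(x))\equiv F_k(x)$ for all $k\in[n-m]$.

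For the commutation of flows, I will use Proposition~\ref{prop:2d}. It suffices to show that $\d\varphi^t(x)X_j(x)\equiv X_j(\varphi^t(x))$ for each $j\in[m]$. This is the standard fact that commuting vector fields, $[X,X_j]=0$, have commuting flows.

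To prove it, set $w(s)=\d\varphi^{-s}(\varphi^s(x))X_j(\varphi^s(x))$, which is the pullback $((\varphi^s)^*X_j)(x)$. Then
\[
\frac{\d}{\d s}w(s)=\d\varphi^{-s}(\varphi^s(x))\,[X,X_j](\varphi^s(x))=0
\]
by the derivative formula for Lie derivatives. In local coordinates, this is the computation that $y(s)=\d\varphi^s(x)X_j(x)$ and $z(s)=X_j(\varphi^s(x))$ both solve the linear variational equation
\[
\dot{\xi}=\D v(\varphi^s(x))\xi .
\]
For $y$ this holds by definition. For $z$ one has
\[
\dot z=\D v_j\, v=\D v\, v_j
\]
by $[v,v_j]=0$. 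Both start at $X_j(x)$ when $s=0$, so uniqueness gives $y(t)=z(t)$. Proposition~\ref{prop:2d} then yields $\varphi^t\circ\varphi_j^{\tau}=\varphi_j^{\tau}\circ\varphi^t$ for all $\tau$.

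The only mildly delicate step is this last one, namely passing from the vanishing Lie bracket to the identity \eqref{eqn:prop2d1} globally rather than in a single chart. I would handle it with the coordinate-free Lie-derivative argument above, or by covering the segment of orbit $\{\varphi^s(x):s\in[0,t]\}$ with finitely many charts and propagating the uniqueness argument chart by chart. Regularity is preserved throughout: if $X$ is $C^r$ (resp. analytic), then $\varphi^t$ is $C^r$ (resp. analytic), so the integrability is of the same class.
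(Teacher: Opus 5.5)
Your proof is correct and follows the same route as the paper. The paper simply observes that the Bogoyavlenskij data $X_1,\ldots,X_m$, $F_1,\ldots,F_{n-m}$ already satisfy conditions~(i)--(iii) of Definition~\ref{dfn:2a} for $f=\varphi^t$; you make explicit what it leaves implicit in condition~(iii), namely the flow-invariance of the $F_k$ and, via Proposition~\ref{prop:2d} and the uniqueness argument for the variational equation, the commutation of $\varphi^t$ with the flows of the $X_j$.
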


\begin{proof}
If the system \eqref{eqn:gsys} is $(m,n-m)$-integrable in the sense of Bogoyavlenskij,
 then its $m$ commutative vector fields and $n-m$ first integrals
 satisfy conditions~(i)-(iii) in Definition~$\ref{dfn:2a}$ for the flow $\varphi^t$ at any $t\in\Rset$.
\end{proof}

We now give three simple examples {\color{black}of one-dimensional diffeomorphisms.}

\begin{ex}
Consider the one-dimensional diffeomorphism
\begin{equation}
f(x)=ax+b,\quad
x\in\Rset,
\label{eqn:ex2a1}
\end{equation}
where $a\neq 0$ and $b\in\Rset$ are constants.
If $a=1$, then $v_1(x)=1$ is a vector field such that its flow satisfies
\begin{equation}
f(\varphi_1^{t}(x))-\varphi_1^{t}(f(x))=0\quad\mbox{for any $t\in\Rset$}
\label{eqn:ex2a2}
\end{equation}
since $\varphi_1^t(x)=x+t$.

We now assume that $a\neq 1$.
Suppose that $v_1(x)=\alpha x+\beta$ with $\alpha\neq 0$ is a vector field
 such that the flow $\varphi_1^t(x)$ of \eqref{eqn:ex2a1} satisfies \eqref{eqn:ex2a2}.
Since $\varphi_1^t(x)=(x+\beta/\alpha)e^{\alpha t}-\beta/\alpha$,
 Eq.~\eqref{eqn:ex2a2} becomes
\[
a((x+\beta/\alpha)e^{\alpha t}-\beta/\alpha)+b
=(ax+b+\beta/\alpha)e^{\alpha t}-\beta/\alpha,
\]
which yields
\[
\beta/\alpha=b/(a-1).
\]
Hence, if we take $\alpha,\beta$ such that the above conditions holds,
 then Eq.~\eqref{eqn:ex2a2} holds.
Thus, the one-dimensional diffeomorphism \eqref{eqn:ex2a1} is $(1,0)$-integrable.
\end{ex}

\begin{ex}
\label{ex:2b}
Let $a\in\Sset^1$ and consider the rigid rotation
\begin{equation}
f(x)=x+a,\quad
x\in\Sset^1,
\label{eqn:ex2b}
\end{equation}
which is a diffeomorphism on $\Sset^1{\color{black}:=\Rset/2\pi\Zset}$.
Obviously, the vector field $v_1(x)=1$ satisfies \eqref{eqn:ex2a2} for any $a\in\Sset^1$.
Hence, the rigid rotation \eqref{eqn:ex2b} is $(1,0)$-integrable.
\end{ex}

\begin{ex}
\label{ex:2c}
Consider a one-dimensional analytic diffeomorphism {\color{black}on $\Sset^1$}
 given by
\begin{equation}
f(x)=x+\pi/k+\epsilon\sin^2kx,\quad
x\in\Sset^1,
\label{eqn:ex2c1}
\end{equation}
where $k\in\Nset$ and $0<\epsilon<1/k$.
The diffeomorphism \eqref{eqn:ex2c1} was discussed in Warning~$1.6$ of {\rm\cite{M84}}. 
In particular, as shown easily,
 $f([(j-1)\pi/n,j\pi/n])=[j\pi/n,(j+1)\pi/n]$,
 $\{(j-1)\pi/n\}_{j=1}^{n}$ is a periodic orbit of period $2n$,
 and $\{f^j(x)-j\pi/n\}_{j=0}^\infty$ is an increasing sequence with upper bound $\pi/n$ 
 when $0<x<\pi/n$.
The last implies that the diffeomorphism \eqref{eqn:ex2c1} has no $C^1$ first integral
 since it has the same value at the infinitely many points $f^j(x)$, $j\in\Nset$,
 for any $x\in(0,\pi/n)$ otherwise.
 
Suppose that $v_1(x)$ is a $C^1$ vector field on $\Sset^1$
 such that $v_1(x)\neq 0$ a.e.
 and  its flow $\varphi_1^t(x)$ satisfies \eqref{eqn:ex2a2}.
{\color{black}By Proposition~$\ref{prop:2d}$}, we have
\begin{equation}
f'(x)v_1(x)=v_1(f(x)),
\label{eqn:ex2c2}
\end{equation}
where the prime represents differentiation with respect to $x$.

Let $x=x^\ast$ be an equilibrium of $v_1(x)$.
Since $f'(x)>0$ on $\Sset^1$,
 $x=f^j(x^\ast)$ is also an equilibrium of $v_1(x)$ for any $j\in\Zset$ by \eqref{eqn:ex2c2}.
Hence, we easily see that $x^\ast=(j-1)\pi/n$ for some $j\in[2n]$.
Indeed, if not, $v_1(f^j(x))=0$, $j\in\Zset$, for some $x\in(0,\pi/n)$,
 so that $v_1(x)=0$ near $x=(j-1)\pi/n$, $j\in[2n]$, and consequently $v_1(x)\equiv 0$.
It follows from \eqref{eqn:ex2c2}
 that $v_1(x)$ has the same sign except at $x=(j-1)\pi/n$, $j\in[2n]$,
 since $f'(x)>0$ on $\Sset^1$ and $f([(j-1)\pi/n,j\pi/n])=[j\pi/n,(j+1)\pi/n]$ for $j\in[2n]$.
Without loss of generality, we assume that $v_1(x)\ge 0$.
So $v_1(x)$ has a maximum, say at $x=x^{\ast\ast}$, on $[0,\pi/n]$.
Using \eqref{eqn:ex2c2} repeatedly, we obtain
\[
v_1(f^{2n}(x^{\ast\ast}))=v_1(x^{\ast\ast})\frac{\d}{\d x}(f^{2n}(x^{\ast\ast})).
\]
Since $\{f^j(x^{\ast\ast})-j\pi/n\}_{j=0}^\infty$ is an increasing sequence,
 we conclude that $v_1(f^{2n}(x^{\ast\ast}))>v_1(x^{\ast\ast})$.
This contradicts that $x^{\ast\ast}$ is a maximum of $v_1(x)$ on $[0,\pi/n]$,
 since $f^{2n}(x^{\ast\ast})\in[0,\pi/n]\mod 2\pi$.
Thus, $v_1(x)$ has no equilibrium.

Integrating \eqref{eqn:ex2c2}, we have
\begin{equation}
g(f(x))=g(x)+c
\label{eqn:ex2c3}
\end{equation}
for $x\neq f^{j-1}(0)$, $j\in[n]$, where $c$ is a constant and
\[
g(x)=\int_0^x\frac{\d y}{v_1(y)}.
\]
Using \eqref{eqn:ex2c3} repeatedly, we obtain
\begin{equation}
g(f^{2n}(x))=g(x)+2nc,
\label{eqn:ex2c4}
\end{equation}
which yields $g(2\pi)=2nc$ at $x=0$.
On the other hand, for $x\in(0,\pi/n)$, we have
\begin{equation}
g(f^{2n}(x))-g(x)=g(f^{2n}(x))-g(x+2\pi)+g(x+2\pi)-g(x)>2nc
\label{eqn:ex2c5}
\end{equation}
since $f^{2n}(x)>x+2\pi$ and by the periodicity of $v_1(x)$
\[
g(x+2\pi)-g(x)=\int_x^{x+2\pi}\frac{\d y}{v_1(y)}=\int_0^{2\pi}\frac{\d y}{v_1(y)}=g(2\pi).
\]
So we have a contradiction between \eqref{eqn:ex2c4} and \eqref{eqn:ex2c5}.
Hence, the diffeomorphism \eqref{eqn:ex2c1} is not $C^1$ integrable
 although it is near-identity when $0<\epsilon\ll 1$.
\end{ex}

\begin{rmk}
Example~$\ref{ex:2c}$ also shows that the diffeomorphism $\eqref{eqn:ex2c1}$ cannot be represented
 by the flow of a $C^\infty$ vector field on $\Sset^1$, i.e.,
 it cannot belong to any one-parameter subgroup of 
 the group $\mathrm{Diff}(\Sset^1)$ consisting of all $C^\infty$ diffeomorphisms of $\Sset^1$.
Indeed, it is $(1,0)$-integrable in the sense of Definition~$\ref{dfn:2a}$ otherwise.
This fact was proven in a different manner in Warning~$1.6$ of {\rm\cite{M84}}.
\end{rmk}

Thus, one-dimensional diffeomorphisms are not always integrable
 in {\color{black}the sense of} Definition~$\ref{dfn:2a}$.
{\color{black}This is in contrast with the fact that}
  one-dimensional vector fields are always $(1,0)$-integrable
 in {\color{black}the sense of} Definition~$\ref{dfn:1b}$.
 
{\color{black}We also give a genuine higher-dimensional example from the literature.}

\begin{ex}
Consider the $n$-dimensional Lyness map {\rm\cite{KLR93,CGM07,CGM08a,CGM08b,BR09}},
\begin{equation}
f(x)=\left(x_2,\ldots,x_n,\frac{\sum_{j=2}^{n}x_j+a}{x_1}\right),\quad
x=(x_1,\ldots,x_n)\in(0,\infty)^n,
\label{eqn:lm}
\end{equation}
which was originally introduced by Lyness {\rm\cite{L42}},
 {\color{black}subsequently discussed for $n=2$} in, e.g., {\rm\cite{BC98,BR04}},  
 and generalized to $n\ge 3$ {\rm\cite{KLR93}}.
It is a $C^\infty$ diffeomorphism from $(0,\infty)^n$ to $(0,\infty)^n$ since
\[
f^{-1}(x)=\left(\frac{\sum_{j=1}^{n-1}x_j+a}{x_n},x_1,\ldots,x_{n-1}\right).
\]
%
The diffeomorphism \eqref{eqn:lm} has analytic first integrals
\[
F_1(x)=\frac{\left(\sum_{j=1}^nx_j+a\right)\prod_{j=1}^n(x_j+1)}{x_1\cdots x_n}
\]
for $n\ge 2$, and
\[
F_2(x)=\frac{\left(\sum_{j=1}^nx_j+x_1x_n+a\right)\prod_{j=1}^{n-1}(x_j+x_{j+1}+1)}{x_1\cdots x_n}
\]
for $n\ge 3$. 
These first integrals are functionally independent a.e. if they exist.
It also has an analytic commutative vector field $v_1(x)=(v_{11}(x),\ldots,v_{1n}(x))$, where
\begin{align*}
&
v_{11}(x)=\frac{(x_1+1)\left(\sum_{j=1}^{n-1}x_j-x_2x_n\right)\prod_{j=2}^{n-1}(x_j+x_{j+1}+1)}{x_1\cdots x_n},\\
&
v_{1n}(x)
=\frac{(x_n+1)\left(\sum_{j=2}^{n-1}x_j-x_1x_{n-1}\right)\prod_{j=1}^{n-2}(x_j+x_{j+1}+1)}{x_1\cdots x_n}
\end{align*}
and
\[
v_{1l}(x)
=\frac{(x_l+1)\left(\sum_{j=1}^{n-1}x_j+x_1x_n\right)(x_{l-1}-x_{l+1})\prod_{j=1,i\neq l-1,l}^{n-1}(x_j+x_{j+1}+1)}
 {x_1\cdots x_n}
\] 
for $2\le l\le n-1$, as easily shown by Proposition~$\ref{prop:2d}$.
In addition, we compute
\[
\D F_j(x)\cdot v_1(x)\equiv 0,\quad
j=1,2,
\]
if the first integrals exist, for $n\le 5$.
See {\rm\cite{CGM08a}} for the details.
Thus, the diffeomorphism \eqref{eqn:lm} is analytically $(1,1)$-integrable for $n=2$,
 and $(1,2)$-integrable for $n=3$.
See also {\rm\cite{CGM07,CGM08b}}.
\end{ex}

Finally, we consider the linear diffeomorphism
\begin{equation}
f(x)=Ax
\label{eqn:ldif}
\end{equation}
in $\Cset^n$, where $A\in\mathrm{GL}(n,\Cset)$.

\begin{prop}
\label{prop:2c}
The linear diffeomorphism \eqref{eqn:ldif} is {\color{black}complex-}analytically $(n,0)$-integrable.
\end{prop}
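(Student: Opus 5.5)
We have to produce $n$ complex-analytic vector fields $X_1,\dots,X_n$ on $\Cset^n$ that commute pairwise, are linearly independent almost everywhere, and satisfy condition~(iii) of Definition~\ref{dfn:2a}. Since $m=n$, no first integrals are needed. By Proposition~\ref{prop:2d}, a vector field $v$ commutes with $f$ if and only if $Av(x)=v(Ax)$. For linear fields $v(x)=Bx$ this reduces to $AB=BA$. The plan is to choose $n$ linear vector fields $B_jx$ with commuting matrices $B_j$ that commute with $A$, and arrange that $B_1x,\dots,B_nx$ are linearly independent for generic $x$. Linear fields $B_jx$ and $B_kx$ have Lie bracket $(B_kB_j-B_jB_k)x$, so it vanishes as soon as the matrices commute.

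The natural choice is to work with the Jordan form. Write $A=PJP^{-1}$. Because $\Cset^n$ is the ambient space, conjugation by $P$ is a complex-linear analytic change of coordinates. It preserves commutation relations, Lie brackets and generic linear independence, so we may assume $A=J=\mathrm{diag}(J_1,\dots,J_s)$. Here $J_i=\lambda_iI_{n_i}+N_i$ is a Jordan block of size $n_i$ with $N_i$ the standard nilpotent shift.

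For each block $i$ we take the $n_i$ matrices $N_i^k$, $k=0,\dots,n_i-1$, embedded block-diagonally with zeros in the other blocks. In total this gives $\sum_i n_i=n$ matrices. All of them are polynomials in the $N_i$'s, which act on different blocks, so they commute with each other. Each also commutes with $J$, since $N_i^k$ commutes with $\lambda_iI+N_i$ within its block and the block structure is respected.

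It remains to verify linear independence at generic $x$. Write $x=(y_1,\dots,y_s)$ with $y_i\in\Cset^{n_i}$. The vectors contributed by block $i$ are supported in that block's coordinates, so it suffices to check each block separately. There they are $y_i,N_iy_i,\dots,N_i^{n_i-1}y_i$. If the last component of $y_i$ is nonzero, these vectors form a triangular system with nonzero diagonal, hence they are independent. This is the usual cyclic vector argument: $y_i$ is cyclic for $N_i$ exactly when its last component is nonzero.

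Thus the fields are independent outside the union of the hyperplanes $\{(y_i)_{n_i}=0\}$, which has measure zero. The determinant of the $n\times n$ matrix with columns $B_jx$ is a product of powers of these coordinates, and this confirms that independence holds a.e. All fields are linear and hence complex-analytic, and $f$ is analytic, so $f$ is complex-analytically $(n,0)$-integrable.

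The main obstacle is the degenerate case of repeated eigenvalues or nontrivial Jordan blocks. There the naive choice of the coordinate scalings $x_j\partial/\partial x_j$ fails to commute with $A$, and this is exactly what the polynomials in $N_i$ fix. The remaining steps are routine.
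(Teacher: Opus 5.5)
Your proof is correct and is essentially the paper's argument: after reducing to Jordan form, the paper takes $v_1(x)=Ax$ and $v_{j+1}(x)=N^jx$ (with $N$ the subdiagonal shift). These span the same space as your $x,Nx,\dots,N^{n-1}x$ because $\lambda\neq 0$, and the paper checks $Av_j(x)=v_j(Ax)$ much as you do via Proposition~\ref{prop:2d}. The only difference is that the paper treats a single Jordan block and leaves the multi-block case implicit, whereas you spell out the block-diagonal assembly and the a.e.\ independence via the product of block determinants, which is a welcome completion.
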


\begin{proof}
We only have to show the statement
 when the coefficient matrix $A$ is given by the Jordan normal form
\[
A=\begin{pmatrix}
\lambda &&&  O\\
1 &\ddots &\\
&\ddots& \lambda&\\
O && 1 & \lambda
\end{pmatrix},
\]
where $\lambda\in\Cset$.

Let
\begin{equation}
v_1(x)=Ax,\quad
v_{j+1}(x)=\sum_{k=1}^{n-j}x_ke_{k+j},\quad
j\in[n-1],
\label{eqn:p3a}
\end{equation}
where $\{e_\ell\}_{\ell=1}^n$ denotes the standard basis in $\Cset^n$.
We easily see that $v_j(x)$, $j\in[n]$, are linearly independent a.e.
 and commute with each other.
Let $\varphi_j^t(x)$ be the flows of $v_j(x)$ for $j\in[n]$.
We easily see that
\[
Av_j(x)-v_j(Ax)\equiv 0,\quad
j\in[n].
\]
Integrating the above relation with respect to $t$, we have
\[
A\varphi_j^t(x)-\varphi_j^t(Ax)\equiv 0,\quad
j\in[n],
\]
since $A\varphi_j^t(x)-\varphi_j^t(Ax)=0$ at $t=0$.
This completes the proof.
\end{proof}

\begin{rmk}\
\begin{enumerate}
\setlength{\leftskip}{-1.6em}
\item[\rm(i)]
In Proposition~$\ref{prop:2c}$, we make no special assumption on the matrix $A$
 unlike {\rm\cite{Z13,JS21}}.
{\color{black}
\item[\rm(ii)]
If the matrix $A$ is real and its eigenvalues are all real,
 then we can choose $\Rset^n$ as its phase space,
 so that the linear diffeomorphism \eqref{eqn:ldif} is real-analytically $(n,0)$-integrable.}
\end{enumerate}
\end{rmk}

Thus, linear diffeomorphisms are always integrable {\color{black}in the sense of} Definition~\ref{dfn:2a},
{\color{black}as is the case for linear vector fields in the sense of Definition~\ref{dfn:1b}.}
A surprising result which shows that
 many nonlinear diffeomorphisms are integrable in certain regions of their phase spaces
 will be reported in \cite{Y25b}.


\section{Relation of Integrability with Regular and Irregular Dynamics}

We now state our extension of the Liouville-Arnold theorem to diffeomorphisms.

\begin{thm}
\label{thm:3a}
Let $f:M^n\to M^n$ be a $C^r$ $(m,n-m)$-integrable diffeomorphism with $m\ge 1$,
 and let $F_j$, $j\in[n-m]$, denote its $n-m$ first integrals.
Suppose that the level set $F^{-1}(c)$ with $c\in\Rset^{n-m}$ is connected,
 where $F(x)=(F_1(x),\ldots,F_{n-m}(x))$,
 and that the $m$ commutative vector fields are linearly independent on it.
Then $F^{-1}(c)$ is invariant under $f$ and {\color{black}$C^r$} diffeomorphic to a toroidal cylinder $\Tset^\ell\times\Rset^{m-\ell}$
 for some $\ell\in[m]\cup\{0\}$, and $f$ is $C^r$ conjugate to
\begin{equation}
(I,\theta)\mapsto(I,\theta+\Theta(I)),\quad
I\in U_I,\quad \theta\in\Tset^\ell\times\Rset^{m-\ell},
\label{eqn:thm3a}
\end{equation}
in a neighborhood of $F^{-1}(c)$, where $U_I$ is a neighborhood of $I=0$ in $\Rset^{n-m}$
 and $\Theta: U_I\to\Tset^\ell\times\Rset^{m-\ell}$ is $C^r$.
\end{thm}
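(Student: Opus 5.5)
Invariance of $F^{-1}(c)$ under $f$ is immediate from condition~(iii) of Definition~\ref{dfn:2a}, since $F_j(f(x))=F_j(x)$. The rest follows the standard Arnold--Jost argument, applied to the commutative vector fields $X_1,\ldots,X_m$, plus a final step identifying $f$ in the resulting coordinates.

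\emph{Step 1: the level set is a toroidal cylinder.} Linear independence of $X_1,\ldots,X_m$ on $F^{-1}(c)$ is an open condition, so it holds on a neighborhood. The $X_j$ are tangent to the level sets of $F$ by condition~(ii). Since $\dim M^n=n$ and the $X_j$ span an $m$-dimensional subspace of $\ker \d F$, the differential $\d F$ is nondegenerate near $F^{-1}(c)$. (If needed, I would shrink to the open set where $\d F_1,\ldots,\d F_{n-m}$ are independent; the independence of the $X_j$ together with $\dim\ker\d F\le n-(n-m)$ forces this on the level set once the rank is maximal, and I would state this as part of the nondegeneracy hypothesis.) Thus nearby level sets are $m$-dimensional submanifolds. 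Fix a base point $x_0\in F^{-1}(c)$ and define the commuting $\Rset^m$-action
\[
\Phi(t,x)=\varphi_1^{t_1}\circ\cdots\circ\varphi_m^{t_m}(x).
\]
Completeness of the flows is implicit in Definition~\ref{dfn:2a}, which uses $\varphi_j^t$ for all $t\in\Rset$. By commutativity and independence, $t\mapsto\Phi(t,x_0)$ is a local diffeomorphism onto an open subset of $F^{-1}(c)$; each orbit is open, so by connectedness the action is transitive. The stabilizer $\Gamma$ is then a discrete subgroup of $\Rset^m$, hence a lattice of rank $\ell\le m$. This gives the $C^r$ diffeomorphism $F^{-1}(c)\cong\Rset^m/\Gamma\cong\Tset^\ell\times\Rset^{m-\ell}$.

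\emph{Step 2: coordinates near the level set.} I choose a $C^r$ transversal section $\sigma(I)$, $I\in U_I$, with $F(\sigma(I))=c+I$, and map
\[
(I,t)\mapsto \Phi(t,\sigma(I)).
\]
The stabilizer lattice $\Gamma(I)$ depends $C^r$ on $I$. I would prove this by the implicit function theorem applied to $\Phi(t,\sigma(I))=\sigma(I)$ at the generators, using the independence of the $X_j$. A $C^r$ linear change $t=B(I)\theta$ that sends $\Gamma(I)$ to $2\pi\Zset^\ell\times\{0\}$ then gives coordinates $(I,\theta)\in U_I\times(\Tset^\ell\times\Rset^{m-\ell})$ near $F^{-1}(c)$.

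\emph{Step 3: the form of $f$.} This is the new ingredient and the main obstacle. Since $F\circ f=F$, the map $f$ preserves $I$. Because $f$ commutes with every $\varphi_j^t$, it commutes with $\Phi$, so
\[
f(\Phi(t,\sigma(I)))=\Phi(t,f(\sigma(I))).
\]
The point $f(\sigma(I))$ lies on the same level set, which is a single orbit, so $f(\sigma(I))=\Phi(\tau(I),\sigma(I))$ for some $\tau(I)$. Hence
\[
f(\Phi(t,\sigma(I)))=\Phi(t+\tau(I),\sigma(I)),
\]
which in the coordinates reads $\theta\mapsto\theta+\Theta(I)$ with $\Theta(I)=B(I)^{-1}\tau(I)$ modulo the lattice. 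The delicate point is choosing $\tau$ as a $C^r$ function of $I$. I would lift locally: near $I=0$, pick $\tau(0)$ and use the implicit function theorem on $\Phi(\tau,\sigma(I))=f(\sigma(I))$. The map $\tau\mapsto\Phi(\tau,\sigma(I))$ is a local diffeomorphism onto the level set by independence, so $\tau(I)$ is $C^r$. Consequently $\Theta:U_I\to\Tset^\ell\times\Rset^{m-\ell}$ is $C^r$, and $f$ is $C^r$ conjugate to the skew translation~\eqref{eqn:thm3a}.
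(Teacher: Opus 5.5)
Your route is the paper's: Arnold's $\Rset^m$-action argument on $F^{-1}(c)$, then $f(\bar x)=\varphi^{\bar t_0}(\bar x)$ plus commutation to see that $f$ acts as a translation, then passage to nearby level sets. Your section $\sigma(I)$ and the implicit-function argument for $\tau(I)$ are more explicit than the paper, which only asserts that $\theta$ and $\theta_0$ ``can be chosen $C^r$ smoothly with respect to $I$''. Apply the implicit function theorem in coordinates adapted to the leaves of $F$, since $\Phi(t,\sigma(I))=\sigma(I)$ is $n$ equations in $m$ unknowns.

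One small correction comes first. Your derivation of nondegeneracy is backwards: $X_1,\ldots,X_m$ spanning an $m$-dimensional subspace of $\ker\d F$ only gives $\operatorname{rank}\d F\le n-m$, which is automatic. Regularity of $F^{-1}(c)$ is a genuinely extra hypothesis. For $M=\Rset^3$, $X_1=\partial/\partial x_1$, $F=(x_3,x_2x_3)$, $f(x)=x+e_1$, $c=0$, everything stated holds, yet $F^{-1}(0)=\{x_3=0\}$ is two-dimensional. So adding the hypothesis, as you propose, is right; the paper tacitly does the same when it speaks of ``regular level sets''.

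The real gap is in Step~2. The implicit function theorem at the generators $\bar e_1,\ldots,\bar e_\ell$ of $\Gamma(0)$ yields $C^r$ curves $t_j(I)\in\Gamma(I)$. That gives only $\Gamma(I)\supseteq\Zset t_1(I)+\cdots+\Zset t_\ell(I)$ and says nothing about elements of $\Gamma(I)$ far from $\Gamma(0)$.

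If $\ell=m$ (e.g.\ $F^{-1}(c)$ compact), you can close this. Reduce any $s\in\Gamma(I_k)$, $I_k\to 0$, modulo the continued lattice into its fundamental parallelepiped, which is uniformly bounded. Pass to a limit in $\Gamma(0)$ and invoke local uniqueness in the implicit function theorem.

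If $\ell<m$ there is no such bound, new periods can enter from infinity, and the conclusion itself fails. Take $M=\Rset^2\setminus\{0\}$, $F=(x_1-1)^2+x_2^2$, $X_1=(x_1^2+x_2^2)(-x_2,\,x_1-1)$, $f$ the time-one map of $X_1$, and $c=1$. All hypotheses hold. $F^{-1}(1)$, the circle through the deleted origin, is a single orbit diffeomorphic to $\Rset$, so $\ell=0$. But the nearby level sets are closed orbits whose periods tend to infinity, and those on which $f$ has irrational rotation number accumulate on $F^{-1}(1)$. On them every $f$-orbit is dense in its circle, hence recurrent and not fixed. In contrast, every nonfixed orbit of $(I,\theta)\mapsto(I,\theta+\Theta(I))$ on $U_I\times\Rset$ is closed and discrete. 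So no neighborhood of $F^{-1}(1)$ carries the claimed conjugacy.

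The paper's proof passes over exactly this point, so your argument is no weaker than the paper's. To make it a proof, you must either restrict to $\ell=m$, or add a hypothesis guaranteeing that the isotropy groups of nearby level sets keep rank $\ell$. In the latter case the same fundamental-domain argument, carried out in the span of the continued lattice, works.
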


\begin{proof}
Using discussions in Section~49 of \cite{A89},
 we  {\color{black}first} show, as in the case of vector fields, 
 that the level set $F^{-1}(c)$ is diffeomorphic to $\Tset^\ell\times\Rset^{m-\ell}$
 for some $\ell\in[m]\cup\{0\}$.
 
Suppose that the hypotheses of the theorem hold.
Let $X_j$, $j\in[m]$, be the {\color{black}linearly independent,} commutative vector fields {\color{black}for $f$}
 and let $\varphi_j^t$, $j\in[m]$, be their flows.
{\color{black}
Since $F_k$, $k\in[n-m]$, are first integrals of $X_j$,
 the vector field $X_j$ is tangent to $F^{-1}(c)$ for $j\in[m]$.
Hence, $\varphi_j^t$, $j\in[m]$, restrict to $F^{-1}(c)$
 and commute with each other.}
Let $\bar{t}=(t_1,\ldots, t_m)\in\Rset^m$ and define
\[
\varphi^{\bar{t}}=\varphi_1^{t_1}\circ\cdots\circ\varphi_m^{t_m}.
\]
{\color{black}
For $\bar{x}\in F^{-1}(c)$ fixed,
 let $\varphi_{\bar{x}}(\bar{t})=\varphi^{\bar{t}}(\bar{x})$.
Then $\varphi_{\bar{x}}$ is $C^{r+1}$.
Hence, $\varphi_{\bar{x}}$ gives a $C^{r+1}$ local chart near $\bar{x}$ in $F^{-1}(c)$.
Since $F^{-1}(c)$ is connected,
 any point $x\in F^{-1}(c)$ can be joined to $\bar{x}$ by a curve in $F^{-1}(c)$.
This curve is covered by finitely many such neighborhoods of points $\bar{x}_j$, $j\in[k]$, lying on it
 for some $k\in\Nset$.
Passing successively through these neighborhoods along the curve,
 we obtain $\bar{t}$ such that $x=\varphi^{\bar{t}}(\bar{x})$,
 where $\bar{t}$ is given by the sum of the shifts corresponding to the pieces of the curve.
Hence, $\varphi_{\bar{x}}$ is also onto $F^{-1}(c)$.

Let $\Gamma_{\bar{x}}=\{\bar{t}\in\Rset^m\mid \varphi^{\bar{t}}(\bar{x})=\bar{x}\}$.
Since $\varphi_{\bar{x}}$ is a local diffeomorphism at $\bar{t}=0$,
 there is a neighborhood $U$ of $\bar{t}=0$ in $\Rset^m$ 
 such that $U\cap\Gamma_{\bar{x}}=\{0\}$.
Thus, $\Gamma_{\bar{x}}$ is a discrete subgroup of $\Rset^m$.
Hence, we find an integer $\ell\in[m]\cup\{0\}$ and linearly independent vectors
$\bar e_j\in\Rset^m$, $j\in[\ell]$, such that
\begin{equation}
\Gamma_{\bar{x}}=\Zset\bar e_1+\cdots+\Zset\bar e_\ell .
\label{eqn:thm3ap}
\end{equation}
In particular, the isotropy subgroup $\Gamma_{\bar{x}}$ is independent of the choice of
$\bar{x}$. Indeed, for any $x\in F^{-1}(c)$ there is
$\bar{t}\in\Rset^m$ such that $x=\varphi^{\bar{t}}(\bar{x})$.
If $\bar{s}\in\Gamma_{\bar{x}}$, then
\[
\varphi^{\bar{s}}(x)
=\varphi^{\bar{s}}(\varphi^{\bar{t}}(\bar{x}))
=\varphi^{\bar{t}}(\varphi^{\bar{s}}(\bar{x}))
=\varphi^{\bar{t}}(\bar{x})=x.
\]
Similarly, if $\bar{s}\in\Gamma_{x}$, then $\varphi^{\bar{s}}(\bar{x})=\bar{x}$.
Thus, we have $\Gamma_x=\Gamma_{\bar{x}}$.}

Choose $\bar{e}_{j+\ell}$, $j\in[m-\ell]$,
 such that $\{\bar{e}_j\}_{j=1}^m$ becomes a basis {\color{black}of} $\Rset^m$.
Let $\{e_j\}_{j=1}^m$ be the standard basis {\color{black}of} $\Rset^m$
 and let $B$ be an $m\times m$ invertible matrix such that
\[
Be_j=\begin{cases}
\bar{e}_j/2\pi  & \mbox{for $j\le\ell$};\\
\bar{e}_j & \mbox{for $j>\ell$}.
\end{cases}
\]
Let $\pi:\Rset^m\to \Tset^\ell\times\Rset^{m-\ell}$ be the natural map
 such that $\pi(s)=\theta$ with
\[
\theta_j=\begin{cases}
s_j \mod\ 2\pi &  \mbox{for $j\le\ell$};\\
s_j & \mbox{for $j>\ell$},
\end{cases}
\]
where $s=(s_1,\ldots,s_m)$ and $\theta=(\theta_1,\ldots,\theta_m)$.
{\color{black}
From \eqref{eqn:thm3ap} we see that
 if $\pi(s)=\pi(s')$, then $B(s-s')\in\Gamma_{\bar{x}}$
  and consequently $\varphi^{Bs}(\bar{x})=\varphi^{Bs'}(\bar{x})$.
Recall that $\varphi_{\bar{x}}$ is onto $F^{-1}(c)$ and locally a diffeomorphism,
 and that it has $\Gamma_{\bar{x}}$ as its isotropy subgroup.
Thus, we can define a diffeomorphism $h:\Tset^\ell\times\Rset^{m-\ell}\to F^{-1}(c)$}
 such that the following diagram commutes:
\[
\begin{CD}
s\in\Rset^m @>B>> \bar{t}\in\Rset^m\\
@V{\pi}VV @VV{\color{black}\varphi_{\bar{x}}}V\\
\theta\in\Tset^\ell\times\Rset^{m-\ell}@>{h}>>F^{-1}(c)\subset M^n
\end{CD}
\]
Thus, $F^{-1}(c)$ is diffeomorphic to $\Tset^\ell\times\Rset^{m-\ell}$.

It remains to show that $f$ is {\color{black}conjugate} to \eqref{eqn:thm3a} near $F^{-1}(c)$.
First, we see that there exists $\bar{t}_0\in\Rset^m$
 such that $f(\bar{x})=\varphi^{\bar{t}_0}(\bar{x})$,
 since {\color{black}$\varphi_{\bar{x}}$ is onto $F^{-1}(c)$.}
Here $\bar{t}_0$ is independent of the choice of $\bar{x}$.
Indeed, when $x=\varphi^{\bar{t}}(\bar{x})\in F^{-1}(c)$, we have
\begin{equation}
f(x)=f(\varphi^{\bar{t}}(\bar{x}))
 =\varphi^{\bar{t}}(f(\bar{x}))=\varphi^{\bar{t}}(\varphi^{\bar{t}_0}(\bar{x}))
 =\varphi^{\bar{t}_0}(\varphi^{\bar{t}}(\bar{x}))=\varphi^{\bar{t}_0}(x).
\label{eqn:pthm3a}
\end{equation}
Let $\theta_0=\pi(B^{-1}\bar{t}_0)$.
Then we have the following commutative diagram:
\[
\begin{CD}
\theta\in\Tset^\ell\times\Rset^{m-\ell} @>{h}>> F^{-1}(c)\\
@V{}VV @VV{f}V\\
\theta+\theta_0\in\Tset^\ell\times\Rset^{m-\ell}@>{h}>>F^{-1}(c)
\end{CD}
\]
Hence, we obtain
\begin{equation*}
h(\theta+\theta_0)=f(h(\theta)).
\end{equation*}
{\color{black}
The same construction applies to the regular level sets
 $F^{-1}(I)$ for $I$ near $c\in\Rset^{n-m}$.
Since $F$ and the flows $\varphi_j^t$, $j\in[m]$, are $C^r$,
 the above coordinates $\theta$ and the corresponding translation vector $\theta_0$
 can be chosen $C^r$ smoothly with respect to $I$.
Letting $\theta_0=\Theta(I)$,}
 we obtain \eqref{eqn:thm3a} near $F^{-1}(c)$.
\end{proof}

\begin{rmk}\
\begin{enumerate}
\setlength{\leftskip}{-1.6em}
\item[\rm(i)]
In Theorem~$\ref{thm:3a}$,
 if the level set $F^{-1}(c)$ is compact,
 then $\ell=m$ and $F^{-1}(c)$ is diffeomorphic to $\Tset^m$,
 as in the classical Liouville-Arnold theorem $($see, e.g., Section~{\rm 49A} of {\rm\cite{A89})}.
\item[\rm(ii)]
If $m=0$ in Theorem~$\ref{thm:3a}$, then $F^{-1}(c)$ reduces to a single point.
\end{enumerate} 
\end{rmk}

{\color{black}
The translation form \eqref{eqn:pthm3a} obtained in the proof of Theorem~\ref{thm:3a}
 also leads to an inclusion of the diffeomorphism
 in the flow of an integrable vector field, as follows.}

\begin{cor}
Under the hypotheses of Theorem~$\ref{thm:3a}$,
 the diffeomorphism $f:M^n\to M^n$
 is the time-one map of an $(m,n-m)$-integrable vector field
  in the sense of Bogoyavlenskij $($see Definition~$\ref{dfn:1b})$.
\end{cor}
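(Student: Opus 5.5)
The plan is to use the translation form \eqref{eqn:pthm3a}: on each level set near $F^{-1}(c)$, $f$ coincides with the time-$\bar t_0$ map of the commuting flows. The natural candidate is the vector field
\[
Y(x)=\sum_{j=1}^m \tau_j(F(x))\,X_j(x),
\]
where $\bar t_0(I)=(\tau_1(I),\ldots,\tau_m(I))$ is the shift vector for the level set $F^{-1}(I)$. We choose it so that $f=\varphi^{\bar t_0(I)}$ on $F^{-1}(I)$, for $I$ in the neighborhood $U_I$ of Theorem~\ref{thm:3a}. Because each $F_k\circ$ and hence each $\tau_j(F(x))$ is constant along the flows of all $X_i$, the flow of $Y$ on $F^{-1}(I)$ is just $\varphi^{t\bar t_0(I)}$. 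Its time-one map is therefore $\varphi^{\bar t_0(I)}=f$, by \eqref{eqn:pthm3a}.

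The key steps are as follows.

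First, show that $\bar t_0$ can be chosen as a $C^r$ function of $I$. In the coordinates \eqref{eqn:thm3a}, $f$ is $(I,\theta)\mapsto(I,\theta+\Theta(I))$ with $\Theta$ being $C^r$. A $C^r$ lift $\tilde\Theta:U_I\to\Rset^m$ exists because $U_I$ can be taken to be a ball, hence simply connected. Then set $\bar t_0(I)=B(I)\tilde\Theta(I)$, where $B(I)$ is the $C^r$ family of matrices from the proof of Theorem~\ref{thm:3a}. In action-angle coordinates, $Y$ becomes simply $\dot I=0$, $\dot\theta=\tilde\Theta(I)$.

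Second, verify Bogoyavlenskij integrability of $Y$. The fields $X_1,\ldots,X_m$ commute with $Y$, since in these coordinates they are constant-coefficient in $\theta$ (namely $\dot\theta=B(I)^{-1}e_j$) and independent of $\theta$. Each $F_k$ is a first integral of $Y$ because $Y$ is tangent to the level sets. Independence a.e.\ of the $X_j$ and the $\d F_k$ is inherited from the hypotheses. We then take $Y,X_2,\ldots,X_m$ as the commuting family (or $Y$ together with suitable $X_j$). If $Y$ happens to be dependent on the $X_j$, we instead use the set $X_1,\ldots,X_m$ together with $Y$ as $X_1$ in Definition~\ref{dfn:1b}. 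To make this legitimate, we choose a lift with $\tilde\Theta$ generic: adding $B(I)^{-1}$-images of lattice vectors does not change the time-one map. Alternatively, we note that condition~(i) of Definition~\ref{dfn:1b} only requires the first field to be $Y$, with the others being any commuting independent fields; we can then take the $m$ fields $Y$ and $m-1$ of the $X_j$ chosen so that the set is independent a.e.

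The main obstacle is the smooth choice of the lift $\tilde\Theta$ and the nondegeneracy issue for $Y$. A further point is that everything is only local, in the neighborhood of $F^{-1}(c)$ where the theorem applies. So the statement should be read as being on that invariant neighborhood, where $Y$ is defined. I would handle the degeneracy by the lattice-shift trick above, adding $2\pi k e_1$ to $\tilde\Theta$ for an integer $k$ when $\ell\ge1$. When $\ell=0$, I would instead use the fact that $Y$ then has a nonvanishing component and can replace one $X_j$ in the family.
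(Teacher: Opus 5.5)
Your core construction is the paper's. The paper takes $\bar X=\sum_j t_{j0}X_j$ with the \emph{constant} shift $\bar t_0$ from \eqref{eqn:pthm3a}. It uses commutativity to write the flow as $\varphi_1^{t_{10}t}\circ\cdots\circ\varphi_m^{t_{m0}t}$, reads off $f$ as the time-one map, and asserts integrability ``since $X_j$ are so.''

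Your $I$-dependent coefficients $\tau_j(F(x))$ are a genuine improvement. Because $\Theta(I)$ varies with $I$, the paper's constant-coefficient field reproduces $f$ only on $F^{-1}(c)$ itself. For instance, with $f(I,\theta)=(I,\theta+I)$ on $\Rset\times\Tset^1$ it gives the rotation by $c$ on every level set. Your field reproduces $f$ on the whole neighborhood. Commutativity survives because $[X_i,(\tau_j\circ F)X_j]=X_i(\tau_j\circ F)\,X_j=0$, a one-line check that makes the coordinate computation unnecessary. You also correctly notice what the paper skips: $Y$ must belong to an a.e.\ independent commuting family.

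Your resolution of that last point, however, contains a false step.

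For $\ell\ge 1$ the lattice shift works, but only after you shrink $U_I$. You need $\bar t_0$ bounded there and some component of $\bar e_1(I)$ bounded away from $0$. Then for $|k|$ large some $\tau_j$ is nonvanishing on $U_I$, and $Y$ can legitimately replace $X_j$. Without shrinking, a $C^\infty$ $\bar t_0$ can meet every shifted lattice point on sets of positive measure.

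For $\ell=0$, $\bar t_0(I)$ is uniquely determined and nothing forces it to be nonzero. Your claim that ``$Y$ then has a nonvanishing component'' fails whenever $f$ is the identity on nearby level sets, since then $Y\equiv 0$. This cannot be proved away, because the statement itself fails there. Take $f=\id$ on $\Rset$ with $X_1=\partial_x$, so $m=n=1$ and $\ell=0$; this satisfies every hypothesis of Theorem~\ref{thm:3a}. Yet a vector field on $\Rset$ with $X(x_0)\neq 0$ moves $x_0$ strictly monotonically for all time. So the only field whose time-one map is $\id$ is $X\equiv 0$, which is not $(1,0)$-integrable. In higher dimensions the identity can sometimes be rescued by a rotation field, but never by a combination $\sum_j\tau_j(F)X_j$. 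The paper's proof has exactly the same hole: $\bar X=0$ when $\bar t_0=0$.

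You therefore need an explicit nondegeneracy hypothesis when $\ell=0$, such as $\bar t_0(I)\neq 0$ for a.e.\ $I$.
\begin{itemize}
\item If $\bar t_0(c)\neq 0$, i.e.\ $f\neq\id$ on $F^{-1}(c)$, then by continuity some $\tau_j$ is nonvanishing near $c$, and you can replace $X_j$ by $Y$.
\item If $\bar t_0$ is nonzero only a.e., complete $Y$ with generic constant linear combinations of the $X_j$, not with $m-1$ of the $X_j$ themselves. A Fubini argument shows the resulting family is independent a.e.
\end{itemize}
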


\begin{proof}
In the proof of Theorem~\ref{thm:3a},
 let $\bar{t}_0=(t_{10},\ldots,t_{m0})$ in \eqref{eqn:pthm3a},
 and let $\bar{\varphi}^t$ denote the flow of the vector field
\begin{equation}
\bar{X}(x)=\sum_{j=1}^m{\color{black}t_{j0}}X_j(x).
\label{eqn:cor}
\end{equation}
{\color{black}Then we have
\[
\bar{\varphi}^t(x)
=\varphi_1^{t_{10}t}\circ\cdots\circ\varphi_m^{t_{m0}t}(x).
\]
Indeed, by the commutativity of $\varphi_j^t$, $j\in[m]$, 
\begin{align*}
\frac{\d}{\d t}\varphi_1^{t_1}\circ\cdots\circ\varphi_m^{t_m}(x)
 =&\sum_{j=1}^m t_{j0}\frac{\partial\varphi_j^{t_j}}{\partial t_j}
 (\varphi_1^{t_1}\circ\cdots\circ\varphi_{j-1}^{t_{j-1}}\circ
 \varphi_{j+1}^{t_{j+1}}\circ\cdots\circ\varphi_{m}^{t_m}(x)),
\end{align*}
which yields the vector field in \eqref{eqn:cor} at $t=0$,
 where $t_j=t_{j0}t$, $j\in[m]$.
Hence,} $f$ is the time-one map of $\bar{X}$ by \eqref{eqn:pthm3a}.
{\color{black}Moreover,} $\bar{X}$ is $(m,n-m)$-integrable
 since $X_j$, $j\in[m]$, are so.
Thus, we obtain the desired result.
\end{proof}

We naturally expect that the presence of chaotic dynamics obstructs integrability.
In particular, if a diffeomorphism exhibits strong forms of dynamical complexity,
 then it should fail to be integrable in the sense introduced above.
We now make this intuition precise.

\begin{prop}
\label{prop:2b}
Let $f:M^n\to M^n$ be a $C^1$ diffeomorphism.
Suppose that the following conditions hold$\,:$
\begin{enumerate}
\setlength{\leftskip}{-1.6em}
\item[\rm(i)]
$f$ is topologically transitive,
 i.e., there exists a dense orbit $\O(x^*)=\{f^k(x)\mid k\in\Zset\}$ with some $x^*\in M^n;$
\item[\rm(ii)]
Every element of $P(f)$ is hyperbolic and $P(f)$ is dense,
 where $P(f)$ denotes the set of periodic points of $f$.
\end{enumerate}
Then $f$ is not $C^1$ integrable.
\end{prop}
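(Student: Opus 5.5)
We argue by contradiction: suppose $f$ is $C^1$ $(m,n-m)$-integrable with commuting vector fields $X_1,\ldots,X_m$ and first integrals $F_1,\ldots,F_{n-m}$. We treat the first integrals and the vector fields separately.

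\emph{First integrals.} If $n-m\ge 1$, each $F_k$ is continuous and constant along orbits of $f$. By condition~(i) the orbit $\O(x^*)$ is dense, so $F_k$ is constant on all of $M^n$. Then $\d F_k\equiv 0$, which contradicts the a.e.\ linear independence required in Definition~\ref{dfn:2a}(ii). Hence $m=n\ge 1$, and it suffices to show that $f$ admits no $C^1$ commutative vector field $X$ that is nonzero a.e.; the vector field $X_1$ would be one.

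\emph{Commutative vector fields.} Let $X$ commute with $f$. By Proposition~\ref{prop:2d}, $\d f(x)X(x)=X(f(x))$. Take a periodic point $p\in P(f)$ of period $k$. Iterating the relation gives
\[
\d f^k(p)X(p)=X(f^k(p))=X(p),
\]
so $X(p)$ is an eigenvector of $\d f^k(p)$ with eigenvalue $1$ unless $X(p)=0$. Since $p$ is hyperbolic by condition~(ii), $\d f^k(p)$ has no eigenvalue of modulus one, and therefore $X(p)=0$. Thus $X$ vanishes on $P(f)$. Because $P(f)$ is dense and $X$ is continuous, $X\equiv 0$ on $M^n$. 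This contradicts the requirement that $X_1$ be linearly independent a.e., i.e.\ nonzero a.e.

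Both cases lead to contradictions, so $f$ is not $C^1$ integrable. The step that carries the real content is the eigenvalue argument at hyperbolic periodic points, which uses the infinitesimal form given by Proposition~\ref{prop:2d}. The rest is continuity plus density. One small point needs care: the case $m=0$ is already covered by the first-integral argument, since $n-m=n\ge1$ there. The argument is the same in the real-analytic and $C^r$ settings.
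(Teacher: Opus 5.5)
Your proof is correct. The first-integral part coincides with the paper's, and the difference lies in how you rule out a nonzero commutative vector field.

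\emph{The paper's route.} It works at the level of the flow. For a $k$-periodic point $x^k$, the relation $f^k\circ\varphi^t(x^k)=\varphi^t(x^k)$ shows that the curve $t\mapsto\varphi^t(x^k)$ consists entirely of fixed points of $f^k$. A hyperbolic periodic point is isolated among such points, so this curve must be constant. Hence $\varphi^t=\id$ on $P(f)$, and by density $\varphi^t=\id$ on $M^n$.

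\emph{Your route.} You pass to the infinitesimal relation of Proposition~\ref{prop:2d} and observe that $X(p)$ is a fixed vector of $\d f^k(p)$. Since $1$ is not an eigenvalue of $\d f^k(p)$, this forces $X(p)=0$.

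\emph{What each buys.} Your argument is more elementary and self-contained. It uses only linear algebra at $p$ and avoids two steps the paper relies on: the isolation lemma for hyperbolic fixed points (an inverse-function-theorem argument applied to $f^k-\id$), and the connectedness step hidden in the paper's phrase ``not isolated otherwise''. It also makes clear that only the nondegeneracy condition ``$1$ is not an eigenvalue of $\d f^k(p)$'' on a dense set of periodic points is needed.

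The paper's argument, in exchange, never differentiates the commutation relation. It therefore excludes any continuous one-parameter group of homeomorphisms commuting with $f$, not only flows of $C^1$ vector fields.

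Your handling of the cases is also more explicit than the paper's: you separate $m=0$ from $m\ge 1$ and tie each conclusion to the a.e.\ independence requirement of Definition~\ref{dfn:2a}. The paper simply asserts that there is no nonconstant first integral and no nonzero commutative vector field.
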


\begin{proof}
Let $F$ be a {\color{black}$C^1$} first integral
 and let $x^*\in M^n$ such that $\overline{\O(x^*)}=M^n$.
Since $F(\O(x^*))=\{F(x^*)\}$, we have
\[
F(M^n)=F\bigl(\overline{\O(x^*)}\bigr)=\overline{F(\O(x^*))}=\{F(x^\ast)\}.
\]
Hence, $F(x)$ is a constant.
Thus, $f$ has no nonconstant {\color{black}$C^1$} first integral.

Let $\varphi^t$ be a $C^1$ flow commuting with $f$,
 and let $x^k$ be a $k$-periodic point for some $k\in\Nset$.
Since
\[
f^k\circ \varphi^t(x^k)=\varphi^t\circ f^k(x^k)=\varphi^t(x^k),
\]
$\varphi^t(x^k)$ is also a $k$-periodic point of $f$ for any $t\in\Rset$.
However, by hyperbolicity of $x^k$, $\varphi^t(x^k)=x^k$ for any $t\in\Rset$
 since $x^k$ is not isolated otherwise.
Hence, we have $\varphi^t|_{P(f)}=\id$, where `$\id$' represents the identity map.
Since $\varphi^t$ is $C^1$ for any $t\in\Rset$ and $P(f)$ is dense,
 we obtain $\varphi^t=\id$ in $M^n$.
Thus, $f$ has no nonzero $C^1$ commutative vector field.
This completes the proof.
\end{proof}

\begin{ex}
\label{ex:2d}
Consider Arnold's cat map {\rm\cite{AA68}:}
\[
f(x)=Ax,\quad
A=\begin{pmatrix}
2 & 1\\
1 & 1
\end{pmatrix},\quad
x\in\tilde{\Tset}^2:=\Rset^2/\Zset^2.
\]
Since it is topologically transitive and $P(f)$ is dense as shown easily
 $($see, e.g., Section~{\rm III.5} of {\rm\cite{Z10})},
 $f:\tilde{\Tset}^2\to\tilde{\Tset}^2$ is not $C^1$ integrable by Proposition~$\ref{prop:2b}$.
Note that if its phase space is $\Rset^2$,
 then by Proposition~$\ref{prop:2c}$, 
{\color{black}the map} $f(x)=Ax$ is analytically $(2,0)$-integrable.
\end{ex}


Let $\Sigma=\{\{a_j\}_{j=-\infty}^\infty\mid a_j\in[2],j\in\Zset\}$
 be the set of bi-infinite sequences of two symbols with the metric
\[
d(a,b)=\sum_{j=-\infty}^\infty 2^{-|j|}|a_j-b_j|
\]
for $a,b\in\Sigma$.
Let $\sigma:\Sigma\to\Sigma$ be the shift map such that
\[
\sigma(a)_j=a_{j+1},\quad
j\in\Zset,
\]
for any $a\in\Sigma$.

\begin{prop}
\label{prop:3a}
Let $f:M^2\to M^2$ be an {\color{black}analytic} diffeomorphism
 having a hyperbolic  invariant set $\Lambda$
 on which it is topologically conjugate to $\sigma:\Sigma\to\Sigma$.
Then $f$ is not analytically integrable. 
\end{prop}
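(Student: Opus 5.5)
We argue by contradiction and treat the cases $m=0,1,2$ of Definition~\ref{dfn:2a} separately. In each case the two facts we exploit are that the periodic points of $f$ in $\Lambda$ are dense in $\Lambda$, all hyperbolic of saddle type and hence isolated among periodic points of the same period, and that $\Lambda$ is a Cantor set with uncountably many points, dense orbits of $\sigma$, and no isolated points. The key tool throughout is analyticity, which turns local vanishing on an accumulating set into global vanishing.

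\emph{First integrals.} Let $F$ be an analytic first integral. Since $\sigma$ is topologically transitive on $\Sigma$, there is $x^\ast\in\Lambda$ whose $f$-orbit is dense in $\Lambda$. As in the proof of Proposition~\ref{prop:2b}, $F$ is then constant on $\Lambda$, say $F\equiv c$. Take a hyperbolic fixed (or periodic) point $p\in\Lambda$ and its local stable manifold $W^{\s}_{\mathrm{loc}}(p)$, an analytic curve. Points of $\Lambda\cap W^{\s}_{\mathrm{loc}}(p)$ accumulate at $p$, because sequences agreeing with the fixed sequence on all nonnegative indices form a perfect set. So $F-c$ vanishes on an analytic arc at a set with an accumulation point, hence on the whole arc; likewise on $W^{\u}_{\mathrm{loc}}(p)$. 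Alternatively, and more simply, $F$ is constant on $W^{\s}(p)$ because $F(x)=F(f^k(x))\to F(p)$. Therefore $F-c$ vanishes on two transversal analytic curves through $p$, so $\d F(p)=0$. The same holds at every periodic point in $\Lambda$, and these are dense in $\Lambda$. Iterating the argument at all points of $\Lambda$, using closures of the invariant manifolds, I expect the conclusion that $\d F$ vanishes on a set with enough accumulation that $F$ is constant. The cleaner route is the following: $W^{\u}(p)$ is an immersed analytic curve, $F$ is constant on it, and its closure contains $\Lambda$ and, via the homoclinic tangle, an open-in-the-curve family whose union has nonempty interior or at least is not contained in a single analytic curve. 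From the vanishing of $\d F$ on transversal curves we get $\d F\equiv0$ near $p$ and hence globally, since $M^2$ is connected. Thus $\d F_1$ cannot be independent a.e., which excludes $m\le1$ as far as first integrals are concerned. (For $m=0$ we need two independent integrals, which is impossible.)

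\emph{Commutative vector fields.} Let $X$ be an analytic vector field commuting with $f$. As in Proposition~\ref{prop:2b}, $\varphi^t$ maps $k$-periodic points to $k$-periodic points, so by isolation $X(p)=0$ for every periodic $p\in\Lambda$. By continuity, $X$ vanishes on $\Lambda$. By Proposition~\ref{prop:2d}, $\d f^k(x)X(x)=X(f^k(x))$. Along $W^{\s}(p)$ the flow $\varphi^t$ preserves $W^{\s}(p)$, since it commutes with $f$ and fixes $p$. Hence $X$ is tangent to $W^{\s}(p)$ and vanishes at the accumulating points $\Lambda\cap W^{\s}_{\mathrm{loc}}(p)$. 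By analyticity along the arc, $X\equiv0$ on $W^{\s}_{\mathrm{loc}}(p)$, and similarly on $W^{\u}_{\mathrm{loc}}(p)$. Then the zero set of the analytic $X$ near $p$ contains two transversal curves.

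This is the main obstacle: the zero set of an analytic vector field in dimension two can be a curve, so the argument needs more than two transversal curves. I would use transversal homoclinic points $q\in\Lambda$ of $p$. The unstable manifold $W^{\u}(p)$ accumulates on $W^{\u}_{\mathrm{loc}}(p)$ through infinitely many distinct arcs (the lambda lemma), and $X$ vanishes on all of them by invariance, because $X\equiv0$ on $W^{\u}_{\mathrm{loc}}(p)$ forces $X\equiv0$ on $W^{\u}(p)=\bigcup_k f^k(W^{\u}_{\mathrm{loc}}(p))$. A germ of a real-analytic set at $p$ of dimension at most one has only finitely many branches. The infinitely many distinct arcs accumulating at $p$ therefore force the zero set to be two-dimensional, so $X\equiv0$ near $p$ and hence on $M^2$. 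So no nonzero analytic commutative vector field exists, which rules out $m\ge1$; with $m=0$ already excluded, $f$ is not analytically integrable. The same accumulation argument, applied to the level set $F^{-1}(c)\supset W^{\u}(p)$, gives the rigorous version of the first-integral step.
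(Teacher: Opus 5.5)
Your proposal is correct and shares the paper's skeleton: a dense orbit forces $F\equiv c$ on $\Lambda$, isolation of hyperbolic periodic points forces a commuting flow to fix them, and analyticity spreads the vanishing along invariant manifolds and then to all of $M^2$. The decisive step, however, uses a different mechanism.

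The paper takes a fixed point $x^1$ and the periodic points of $\Lambda$ accumulating at $x^1$, together with their analytic invariant manifolds, on all of which $F=c$. It then passes directly to $F=c$ near $x^1$ by the identity theorem in several variables, and treats the vector field by ``the above argument''.

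You instead stay at one fixed point $p$ and take a transversal homoclinic point $q\in\Lambda$. Such a point exists by the shift conjugacy and hyperbolicity, since $W^{\s}$ and $W^{\u}$ are tangent to $E^{\s}$ and $E^{\u}$ along $\Lambda$. The $\lambda$-lemma then gives infinitely many distinct arcs $f^n(D)\subset W^{\u}(p)$ converging to $W^{\u}_{\mathrm{loc}}(p)$, and the finite branch structure of one-dimensional real-analytic germs forces the zero set to have interior.

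This costs more machinery, but it justifies exactly the step the paper handles quickly. Vanishing on a sequence accumulating at a point, or on two transversal curves, does not by itself give vanishing nearby (think of $y$ or $xy$ at the origin), as you rightly point out. Your treatment of $X$ is likewise more explicit than the paper's: tangency to $W^{\s}(p)$, vanishing on $\Lambda\cap W^{\s}_{\mathrm{loc}}(p)$, and propagation along $W^{\u}(p)$ via $X(f(x))=\d f(x)X(x)$.

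Three points to tidy up:
\begin{enumerate}
\item Discard the intermediate claim in your first-integral paragraph that vanishing on transversal curves gives $\d F\equiv0$ near $p$; it does not follow. Use only the branch-counting argument on the analytic set $F^{-1}(c)\supset W^{\s}(p)\cup W^{\u}(p)$.
\item Spell out the branch-counting argument. In a small ball, a germ of dimension at most one consists of finitely many embedded half-branches meeting only at $p$. For large $n$, the arc $f^n(D)$ crosses $W^{\s}_{\mathrm{loc}}(p)$ transversally at $f^n(q)\neq p$, so the zero set is not a one-manifold there.
\item As in the paper, the final globalization needs $M^2$ to be connected.
\end{enumerate}
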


\begin{proof}
Suppose that the hypothesis holds but $f$ is analytically $(m,2-m)$-integrable
 for some $m\in[2]\cup\{0\}$.
We see that $f$ has a dense orbit in $\Lambda$
 since so does $\sigma$ in $\Sigma$.
Let $\O(x^*)$ be such a dense orbit for $x^*\in\Lambda$.
Moreover, the set of periodic points of $f$ on $\Lambda$,
 $P\bigl(f|_\Lambda\bigr)$, is dense in $\Lambda$,
 and contains periodic points of any period,
 since so does $P(\sigma)$ in $\Sigma$.

\begin{figure}[t]
\includegraphics[scale=0.6]{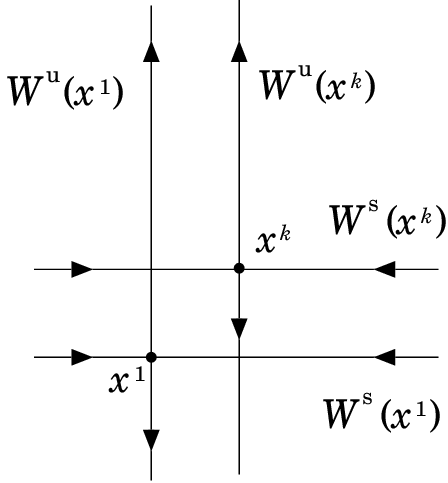}
\caption{Proof of Proposition~\ref{prop:3a}.
$W^\s(x)$ and $W^\u(x)$ denote pieces of the stable and unstable manifolds
 of $x=x^1,x^k\in P(f)$.
Here $x^k$ represents a $k$-periodic point.
\label{fig:3a}}
\end{figure}

Let $F$ be an analytic first integral.
Since
\[
F(\Lambda)=F\bigl(\overline{\O(x^*)}\bigr)=\overline{F(\O(x^*))}=\{F(x^\ast)\},
\]
we have $F(x)=F(x^*)=:c$ on $\Lambda$.
Let $x^1$ be a fixed point, which is hyperbolic by assumption.
We find a hyperbolic periodic point in any neighborhood of $x^1$.
Including $x^1$, these periodic points have analytic stable and unstable manifolds
(see, e.g., Theorem~7.1 in Chapter~1 of \cite{IY07}),
 on which infinitely many points are contained by $\Lambda$,
 and they are accumulation points of the infinite sets.
See Fig.~\ref{fig:3a}.
Since $F(x)$ is analytic, we have $F(x)=c$ on the stable and unstable manifolds
 as well as at these periodic points, by the identity theorem of one variable,
 so that it also holds in a neighborhood of $x^1$.
Indeed, we have $F(x)=c$ on any sequence accumulating to $x^1$.
Hence, it follows from the identity theorem of several variables (see, e.g., Theorem~1A.6 of \cite{RH65})
 that $F(x)=c$ on $M^2$.
This means that $f$ has no analytic first integral.

Let $\varphi^t$ be the flow of a commutative vector field,
 and let $x^k\in P(f_\Lambda)$ with a period $k$.
As in the proof of Proposition~\ref{prop:2b},
 we can show that $\varphi^t(x^k)=x^k$ and consequently $\varphi^t=\id$ on $\Lambda$
 for any $t\in\Rset$.
Moreover, using the above argument,
 we obtain $\varphi^t=\id$ on $M^2$.
This completes the proof.
\end{proof}

\begin{rmk}
It is not difficult to extend Proposition~$\ref{prop:3a}$
 to some cases of higher-dimensional diffeomorphisms, for instance,
 in which there exists a two-dimensional, normally hyperbolic invariant manifold {\rm\cite{HPS77}}
 containing the invariant set $\Lambda$.
\end{rmk}

\begin{ex}
\label{ex:3a}
Let $S=[0,1]^2$ and let $f:S\to\Rset^2$ be the Smale horseshoe
 $($see, e.g., Section {\rm I.5} of {\rm\cite{S67}} or Section~$5.1$ of {\rm\cite{GH83})}.
Since it satisfies the hypothesis of Proposition~$\ref{prop:3a}$
 $($see, e.g., Theorem~$5.1.1$ of {\rm\cite{GH83})},
 $f$ is not analytically integrable.
\end{ex}

It is well known that a two-dimensional diffeomorphism contains a horseshoe 
 if it has a hyperbolic periodic orbits
 whose stable and unstable manifolds intersect transversely.
See, e.g., Theorem~5.3.5 of \cite{GH83} or Theorem~III.17 of \cite{Z10}.
Such a diffeomorphism is not analytically integrable, as in Example~\ref{ex:3a}
(cf. Theorem~3.8 of \cite{M73}).


\section{Symplectic Diffeomorphisms}

Finally, we consider symplectic diffeomorphisms
 and discuss some relations {\color{black}between their standard integrability
 and our definition of integrable diffeomorphisms in Definition~\ref{dfn:2a}.}

We first briefly review previous results on symplectic diffeomorphisms and their integrability
 \cite{BRSG91,V91,HJN16}.
Here, for simplicity,
 we restrict ourselves to symplectic diffeomorphisms of the form $(Q(q,p),P(q,p))$,
 where $Q,P:\Rset^n\times\Rset^n\to\Rset^n$ are $C^r$ and $q,p\in\Rset^n$,
 as in \cite{BRSG91}.
In addition, the domain $\Rset^n\times\Rset^n$ is equipped with the canonical symplectic form
\[
\Omega=\sum_{j=1}^n\d p_j\wedge\d q_j,
\]
where $q_j,p_j$ are the $j$-th components of $q,p$ for $j\in[n]$.
By Darboux's theorem (see, e.g., Section~43 of \cite{A89}),
 {\color{black}the local discussions below also apply to general symplectic diffeomorphisms,
as  in the case of} Hamiltonian vector fields in Section~2.
Recall that the diffeomorphism $(Q(q,p),P(q,p))$ is called \emph{symplectic}
 if it preserves the symplectic form $\Omega$.

Let $\{(q(i),p(i))\}_{i\in\Zset}$ denote an orbit of the discrete dynamical system
 defined by the symplectic map $(Q(q,p),P(q,p))$, i.e.,
\begin{equation}
q(i+1)=Q(q(i),p(i)),\quad
p(i+1)=P(q(i),p(i)),\quad
i\in\Zset.
\label{eqn:dsys}
\end{equation}
Letting $(q',p')=(Q(q,p),P(q,p))$, we have $\d p\wedge\d q=\d p'\wedge\d q'$, so that
\[
\d(p\d q+q'\d p')=\d(p\d q-p'\d q'+\d(q'p'))=0.
\]
By Poincar\'{e}'s lemma (e.g., Section~36 of \cite{A89}),
 there exists a function $S(q,p')$ {\color{black}locally} such that
\[
p\d q+q'\d p'=\d S.
\]
Hence, we have the relation
\[
q(i+1)=\D_{p'}S(q(i),p(i+1)),\quad
p(i)=\D_qS(q(i),p(i+1)),\quad
i\in\Zset,
\]
for the orbit $\{(q(i),p(i))\}_{i\in\Zset}$.
The function $S(q,p')$ is called a \emph{generating function}.
We rewrite $H(q,p')=qp'+S(q,p')$ to obtain the following.

\begin{prop}
\label{prop:4a}
There exists a $C^{r+1}$ scalar-valued function $H(q,p)$ {\color{black}locally}
 such that the system \eqref{eqn:dsys} is written as
\begin{equation}
\begin{split}
q(i+1)=&q(i)+\D_pH(q(i),p(i+1)),\\
p(i+1)=&p(i)-\D_qH(q(i),p(i+1)).
\end{split}
\label{eqn:dsys1}
\end{equation}
\end{prop}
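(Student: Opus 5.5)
The plan is to start from the generating function $S(q,p')$ built just before the statement, satisfying $p\,\d q+q'\,\d p'=\d S$ locally, and set
\[
H(q,p'):=S(q,p')-q\cdot p'.
\]
The sign is chosen so that the two relations come out exactly as in \eqref{eqn:dsys1}. I also want to note that the expression $qp'+S$ written in the text gives the same formulas after $S\mapsto -S$ and $p\,\d q+q'\,\d p'\mapsto$ the type-3 generating relation, so the sign convention is immaterial. Indeed, $\d S=p\,\d q+q'\,\d p'$ gives $\D_qS=p$ and $\D_{p'}S=q'$. Hence
\[
\D_{p'}H(q,p')=q'-q,\qquad \D_qH(q,p')=p-p'.
\]
Evaluating these at $q=q(i)$, $p=p(i)$, $p'=p(i+1)$, $q'=q(i+1)$ yields $q(i+1)=q(i)+\D_pH(q(i),p(i+1))$ and $p(i+1)=p(i)-\D_qH(q(i),p(i+1))$. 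Here $\D_p$ denotes differentiation with respect to the second argument.

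The substantive point, which I expect to be the main obstacle, is justifying that $(q,p')$ is a legitimate local coordinate system. This is needed so that $S$ is a function of $(q,p')$ and $p\,\d q+q'\,\d p'$ can be viewed as a closed one-form in these variables. It requires $\det\D_pP(q,p)\neq0$, so that the map $(q,p)\mapsto(q,P(q,p))$ is a local $C^r$ diffeomorphism by the inverse function theorem. I would state this as the implicit nondegeneracy hypothesis behind ``locally'', as is standard for type-2 generating functions. If necessary, I would remark that at points where it fails one may first compose with a linear symplectic change of coordinates (a partial exchange $q_j\leftrightarrow p_j$) to restore it.

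Under this hypothesis, closedness follows from $\d p\wedge\d q=\d p'\wedge\d q'$, as computed in the text. Poincar\'e's lemma on a simply connected neighborhood then gives $S$.

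For regularity, $\d S$ has $C^r$ coefficients ($p$ and $q'$ expressed through the $C^r$ inverse of $(q,p)\mapsto(q,p')$), so $S$, and hence $H$, is $C^{r+1}$. Finally, I would observe that \eqref{eqn:dsys1} is an implicit scheme. Given $(q(i),p(i))$, the second equation determines $p(i+1)$ uniquely and locally by the same nondegeneracy, so \eqref{eqn:dsys1} indeed reproduces the orbits of \eqref{eqn:dsys}.
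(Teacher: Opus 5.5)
Your proof is correct and follows the paper's argument. The paper's ``$H(q,p')=qp'+S(q,p')$'' must be read as $S=qp'+H$, which is exactly your $H=S-q\cdot p'$; taken literally it gives $\D_{p'}H=q+q'$. Your hypothesis $\det\D_pP\neq0$ is what the paper's ``locally'' tacitly assumes, and it is genuinely needed: for $(q,p)\mapsto(p,-q)$ no such $H$ exists, and no partial exchange $q_j\leftrightarrow p_j$ repairs this. Do drop your side remark that the sign convention is immaterial: replacing $S$ by $-S$ in $qp'+S$ yields $-H$, which reverses both signs in \eqref{eqn:dsys1}.
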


The system \eqref{eqn:dsys1} is very similar
 to the canonical Hamiltonian vector field \eqref{eqn:Hsys1}.
We now state the standard definition of integrability of symplectic diffeomorphisms
 for \eqref{eqn:dsys}.

\begin{dfn}[Integrability of symplectic diffeomorphisms]
\label{dfn:4a}
We say that the symplectic diffeomorphism $(Q(q,p),P(q,p))$ is \emph{integrable}
 if {\color{black}there exist} $n$ {\color{black}$C^r$} scalar-valued functions {\color{black}$F_j(q,p)$, $j\in[n]$,}
 such that the derivatives {\color{black}$\D F_j(q,p)$, $j\in[n]$,} are linearly independent a.e.
 and they are in involution, i.e.,
\[
\{F_j,F_k\}(q,p):=\D_qF_j(q,p)^\T\D_pF_k(q,p)-\D_pF_j(q,p)^\T\D_qF_k(q,p)\equiv 0\quad\mbox{for $j,k\in[n]$}.
\]
\end{dfn}

Suppose that the symplectic diffeomorphism $(Q(q,p),P(q,p))$ is integrable
 and the first integrals $F_j(q,p)$, $j\in[n]$, are functionally independent.
Let $F(q,p)=(F_1(q,p),\ldots,F_n(q,p))$ and let $\hat{p}=F(q,p)$. 
{\color{black}Redefining the variables $q,p$ if necessary,
 we assume that $\D_p F(q,p)$ is nonsingular in a neighborhood,
 since $\D F_j(q,p)$, $j\in[n]$,} are linearly independent {\color{black}a.e.}
Hence, $F(q,p)$ is {\color{black}locally} invertible for $q$ fixed,
 {\color{black}and} there exists a {\color{black}$C^{r}$} function $\phi(\hat{p},q)$ such that $\hat{p}=F(q,\phi(\hat{p},q))$.
Define a {\color{black}$C^{r}$} generating function as
\[
S(\hat{p},q)=\sum_{j=1}^n\int\phi_j(\hat{p},q)\d q_j,
\]
where $\phi_j(\hat{p},q)$ is the $j$-th {\color{black}component} of $\phi(\hat{p},q)$ for $j\in[n]$,
 and let $\hat{q}=\D_{\hat{p}}S(\hat{p},q)$.
Hence, the diffeomorphism $(Q(q,p),P(q,p))$ is {\color{black}locally} transformed
 into a symplectic diffeomorphism under the {\color{black}$C^{r-1}$} change of coordinates from $(q,p)$ to $(\hat{q},\hat{p})$.
{\color{black}Thus,}
 we can  {\color{black}locally} write the transformed diffeomorphism
 {\color{black}in the form of} \eqref{eqn:dsys} as
\begin{equation}
\hat{q}(i+1)=\hat{q}(i)+\D H(\hat{p}(i+1)),\quad
\hat{p}(i+1)=\hat{p}(i),
\label{eqn:idsys}
\end{equation}
since $\hat{p}(i)$ is {\color{black}preserved}
 and consequently the generating function $H$
 does not depend {\color{black}on} $\hat{q}(i)$.
Thus, the symplectic map \eqref{eqn:idsys} is  {\color{black}locally} solvable,
 and its dynamics are very simple like \eqref{eqn:thm3a}.
We also have the following.

\begin{prop}
\label{prop:4b}
{\color{black}
Suppose that the $2n$-dimensional symplectic diffeomorphism \eqref{eqn:dsys}
 is $C^r$ integrable  in the sense of Definition~$\ref{dfn:4a}$ with $r\ge 2$.
Then it is locally $C^{r-1}$ $(n,n)$-integrable in the sense of Definition~$\ref{dfn:2a}$
 near any point $(q,p)$ at which the derivatives of the first integrals are linearly independent.}
\end{prop}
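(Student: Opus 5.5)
Near a point where $\D F_1,\ldots,\D F_n$ are linearly independent, I will take as commutative vector fields the Hamiltonian vector fields $X_{F_j}$, $j\in[n]$, and as first integrals the functions $F_j$ themselves. So $m=n$, and the $n$ first integrals live on a $2n$-dimensional space, matching the $(n,n)$ count in Definition~\ref{dfn:2a}. Since $F_j\in C^r$, the fields $X_{F_j}$ are $C^{r-1}$. With $r\ge2$ they have well-defined flows, which explains the loss of one derivative in the statement.

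Conditions (i) and (ii) of Definition~\ref{dfn:2a} are standard facts. Nondegeneracy of $\Omega$ and linear independence of the $\d F_j$ give linear independence of the $X_{F_j}$ near the chosen point. Involutivity gives $[X_{F_j},X_{F_k}]=X_{\{F_j,F_k\}}=0$ (up to sign convention). Finally, $X_{F_j}(F_k)=\{F_k,F_j\}=0$, so the $F_k$ are first integrals of every $X_{F_j}$. I read Definition~\ref{dfn:4a} as implicitly requiring that the $F_j$ be first integrals of the map, i.e. $F_j\circ f=F_j$; this also gives the second half of condition (iii).

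The remaining and main step is the commutativity in condition (iii). By Proposition~\ref{prop:2d}, I only need $\d f\,X_{F_j}=X_{F_j}\circ f$. For a symplectic $f$ and any function $G$, one has $f_*X_G=X_{G\circ f^{-1}}$. Since $F_j\circ f^{-1}=F_j$, this gives $\d f(x)X_{F_j}(x)=X_{F_j}(f(x))$ immediately.

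In coordinates this says the following. Write $J$ for the standard symplectic matrix and $\D f$ for the Jacobian of $(Q,P)$. Differentiating $F_j(f(x))=F_j(x)$ gives $\D f(x)^\T\nabla F_j(f(x))=\nabla F_j(x)$. The symplectic condition $\D f\,J\,\D f^\T=J$ then yields
\[
\D f(x)J\nabla F_j(x)=\D f(x)J\D f(x)^\T\nabla F_j(f(x))=J\nabla F_j(f(x)).
\]
The obstacle is then mostly bookkeeping. The statement is local, while Proposition~\ref{prop:2d} and the flows are global objects, so the flows must be restricted to a neighborhood $U$ of the point. Relation \eqref{eqn:dfn2a} should then be interpreted for those $x$ and $t$ with $\varphi_j^t(x)$ and $f(x)$ in the domain; the ODE uniqueness argument in the proof of Proposition~\ref{prop:2d} applies verbatim there. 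If needed, I would choose $U$ so that both $U$ and $f(U)$ lie in the region where the $\d F_j$ are independent, which is an open condition preserved by $f$ because $F\circ f=F$.
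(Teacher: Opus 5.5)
Your proof is correct, but it takes a different route from the paper's.

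\textbf{The paper's route.} The paper first uses the generating function $S(\hat p,q)$ to pass to local canonical coordinates $(\hat q,\hat p)$ with $\hat p=F(q,p)$, in which the map takes the form \eqref{eqn:idsys}. There the Jacobian is block unipotent, so the constant fields $(e_j,0)$ satisfy the criterion of Proposition~\ref{prop:2d}, and the $\hat p_j$ are common first integrals. Everything is then pulled back by the $C^{r-1}$ coordinate change.

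\textbf{How yours differs.} In canonical coordinates, $\partial/\partial\hat q_j$ is, up to sign, the Hamiltonian vector field of $\hat p_j=F_j$. So the paper's commuting fields are exactly your $X_{F_j}$. The difference is that you verify $\d f\,X_{F_j}=X_{F_j}\circ f$ invariantly, from $\D f\,J\,\D f^\T=J$ and $\D f^\T\,\nabla F_j\circ f=\nabla F_j$, with no change of coordinates.

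\textbf{What your route buys.} It is shorter and avoids constructing $S$, whose well-definedness tacitly relies on involutivity making $\sum_j\phi_j\,\d q_j$ closed. It also gives the $C^{r-1}$ regularity of the fields directly. Pulling back constant fields by a $C^{r-1}$ coordinate change only guarantees $C^{r-2}$ a priori.

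Your argument in fact proves more than is stated. As you observe, the set where the $\d F_j$ are independent is open and $f$-invariant. All conditions of Definition~\ref{dfn:2a} hold on the whole of that set. They even hold globally with a.e.\ independence, with the usual caveat that the flows need not be complete. So the localization to a neighborhood of a point, and most of your domain bookkeeping, is not really needed.

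\textbf{What the paper's route buys.} It yields the explicit local normal form \eqref{eqn:idsys}, i.e.\ local solvability of the map in the spirit of \eqref{eqn:thm3a}. Your argument does not provide this.
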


\begin{proof}
Suppose that the diffeomorphism \eqref{eqn:dsys} is {\color{black}$C^r$} integrable in the sense of Definition~\ref{dfn:4a}.
Then it is {\color{black}locally} transformed into \eqref{eqn:idsys} by a {\color{black}$C^{r-1}$} transformation,
 as described above.

It is clear that each component of $\hat{p}$ is an analytic first integral of \eqref{eqn:idsys}.
Let $\{e_j\}_{j=1}^n$ be the standard basis in $\Rset^n$.
Since
\[
\begin{pmatrix}
\id_n & \D^2 H(\hat{p}(i))\\
0 & \id_n
\end{pmatrix}
\begin{pmatrix}
e_j\\
0
\end{pmatrix}
=\begin{pmatrix}
e_j\\
0
\end{pmatrix},\quad
j\in[n],
\]
we see by Proposition~\ref{prop:2d}
 that $v_j({\color{black}\hat{q},\hat{p}})=(e_j,0)$, $j\in[n]$, are commutative vector fields of \eqref{eqn:idsys}.
Since each component of $\hat{p}$ is obviously an analytic first integral
 of $v_j({\color{black}\hat{q},\hat{p}})$, $j\in[n]$,
 the diffeomorphism \eqref{eqn:idsys} is {\color{black}$C^{r-1}$} $(n,n)$-integrable.
Since the {\color{black}local} transformation from \eqref{eqn:dsys} to \eqref{eqn:idsys} is {\color{black}$C^{r-1}$}
 and the diffeomorphism \eqref{eqn:dsys} has $n$ {\color{black}$C^r$} first integrals by assumption,
 we obtain the desired result.
\end{proof}
 
We return to a general $C^r$ diffeomorphism $f:M^n\to M^n$,
 where $M^n$ is an $n$-dimensional $C^\infty$ or analytic manifold, as in Sections~3 and 4.
We first modify the \emph{cotangent lift trick},
 which was explained shortly for vector fields in Section~2, for diffeomorphisms.

In a local coordinate system,
 let $(x,p)\in T^*M^n$ and let $H(x,p')=p'\cdot(-x+f(x))$.
We have the $2n$-dimensional symplectic diffeomorphism
\begin{equation}
x(i+1)=f(x(i)),\quad
p(i)=\D f(x(i))^\T p(i+1)
\label{eqn:dsys2}
\end{equation}
in the form of \eqref{eqn:dsys1}.
Note that the first component of \eqref{eqn:dsys2} is the same as the original one
(cf. Eq.~\ref{eqn:Hsys2}).
We have a counterpart of Proposition~\ref{prop:1a} for diffeomorphisms as follows.

\begin{prop}
\label{prop:4c}
If the diffeomorphism $f:M^n\to M^n$ is $C^r$ integrable
 in the sense of Definition~$\ref{dfn:2a}$,
 then the symplectic diffeomorphism \eqref{eqn:dsys2} is  $C^r$ integrable
 in the sense of Definition~$\ref{dfn:4a}$.
\end{prop}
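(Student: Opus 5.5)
The plan mirrors the construction behind Proposition~\ref{prop:1a}. Let $X_1,\ldots,X_m$ (with local representatives $v_1,\ldots,v_m$) and $F_1,\ldots,F_{n-m}$ be the commutative vector fields and first integrals from Definition~\ref{dfn:2a}. On $T^*M^n$, with local coordinates $(x,p)$, I take the $n$ functions
\[
G_j(x,p)=p\cdot v_j(x),\quad j\in[m],\qquad
G_{m+k}(x,p)=F_k(x),\quad k\in[n-m].
\]
These are the candidate first integrals of \eqref{eqn:dsys2}, and I will check the requirements of Definition~\ref{dfn:4a}, together with their invariance under the map, in this order.

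First, invariance. The functions $G_{m+k}$ are invariant because $x(i+1)=f(x(i))$ and $F_k\circ f=F_k$. For $G_j$, Proposition~\ref{prop:2d} gives $\D f(x)v_j(x)=v_j(f(x))$. Hence
\[
p(i)\cdot v_j(x(i))=\bigl(\D f(x(i))^\T p(i+1)\bigr)\cdot v_j(x(i))
=p(i+1)\cdot \D f(x(i))v_j(x(i))=p(i+1)\cdot v_j(x(i+1)).
\]
This is the discrete analogue of the computation preceding Proposition~\ref{prop:1a}.

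Second, involution. The bracket $\{G_j,G_k\}$ for $j,k\le m$ equals (up to sign) $p\cdot[v_j,v_k](x)$, which vanishes by condition~(i). The bracket $\{G_j,G_{m+k}\}$ equals $\pm\D F_k(x)\cdot v_j(x)$, which vanishes by condition~(ii). The bracket of two functions of $x$ alone is zero.

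Third, independence. On the open dense set where $v_1(x),\ldots,v_m(x)$ are linearly independent and $\D F_1(x),\ldots,\D F_{n-m}(x)$ are linearly independent, consider the Jacobian in $(x,p)$. The $p$-derivatives of $G_1,\ldots,G_m$ are $v_1(x),\ldots,v_m(x)$, which are independent. The $G_{m+k}$ have zero $p$-part and independent $x$-parts. A block-triangular argument therefore gives linear independence of $\D G_1,\ldots,\D G_n$.

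The main obstacle is this independence step, because the definition only gives independence of each family a.e. separately. I will use that the two exceptional sets are closed with empty interior, or measure zero, so their union is still negligible. For $C^r$ functions whose non-independence set is closed, the complement of a finite union of nowhere-dense closed sets is dense, and the product with the $p$-fibre stays full measure.

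A second point is regularity. $G_j$ is $C^r$ since $v_j$ is $C^r$, matching the $C^r$ requirement in Definition~\ref{dfn:4a}. One should also note that \eqref{eqn:dsys2} is indeed symplectic, having been generated by $H$ as in Proposition~\ref{prop:4a}.
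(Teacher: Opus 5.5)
Your proposal is correct and follows the paper's proof essentially line for line. You use the same functions $p\cdot v_j(x)$ and $F_k(x)$, the same invariance computation via Proposition~\ref{prop:2d}, and the same three Poisson-bracket computations. Your block-triangular check that $\D G_1,\ldots,\D G_n$ are independent a.e.\ uses the $p$-parts $v_j(x)$ and the $x$-parts $\D F_k(x)$; the paper leaves this step implicit, so it is a worthwhile addition.
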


\begin{proof}
Suppose that $f$ is $(m,n-m)$-integrable
 and let $v_j(x)$, $j\in[m]$, and $F_j(x)$, $j\in[n-m]$,
 be its commutative vector fields and first integrals, respectively.
Obviously, $F_j(x)$, $j\in[n-m]$, are first integrals of \eqref{eqn:dsys2}.
Let $G_j(x,p)=p\cdot v_j(x)$, $j\in[m]$.
Using \eqref{eqn:dsys2} and Proposition~\ref{prop:2d}, we compute
\begin{align*}
G_j(x(i),p(i))=&p(i)\cdot v_j(x(i))=\D f(x(i))^\T p(i+1)\cdot v_j(x(i))\\
=&p(i+1)\cdot\D f(x(i))v_j(x(i))=p(i+1)\cdot v_j(f(x(i)))\\
=&p(i+1)\cdot v_j(x(i+1))=G_j(x(i+1),p(i+1)),\quad
j\in[m].
\end{align*}
Hence, $G_j(x,p)$, $j\in[m]$, are first integrals of \eqref{eqn:dsys2}.

It remains to show that these first integrals are in involution.
Obviously,
\[
\{F_j(x),F_k(x)\}(x,p)\equiv 0,\quad
j,k\in[n-m].
\]
We compute
\begin{align*}
\{F_j(x),G_k(x,p)\}(x,p)=&\D_xF_j(x)^\T\D_pG_k(x,p)\\
=&\D_x F_j(x)^\T v_k(x)\equiv 0,\quad
j\in[n-m],\ k\in[m],
\end{align*}
and
\begin{align*}
\{G_j(x,p),G_k(x,p)\}(x,p)=&\D_xG_j(x,p)^\T\D_pG_k(x,p)-\D_pG_j(x,p)^\T\D_xG_k(x,p)\\
=&p^\T(\D v_j(x)v_k(x)-\D v_k(x)v_j(x))\equiv 0,\quad
j,k\in[m],
\end{align*}
{\color{black}by conditions~(i) and (ii) of Definition~\ref{dfn:2a}.}
Thus, we obtain the desired result.
\end{proof}

\section*{Acknowledgments}
The author thanks Shoya Motonaga for helpful comments and discussions,
 especially on Definition~\ref{dfn:2a}
 and the proofs of Theorem~\ref{thm:3a} and Proposition~\ref{prop:2b}.
In particular, the proof of Proposition~\ref{prop:2b} is due to him.
{\color{black}The author is also grateful to the anonymous referees for their careful reading
 and constructive comments, which were valuable during the revision.}
This work was partially supported by the JSPS KAKENHI Grant Number JP23K22409.


\section*{Data availability statements}
No new data were created or analysed in this study. 
%


\appendix

\renewcommand{\theequation}{\Alph{section}.\arabic{equation}}


\end{document}